\newcommand{\Palm}{\scriptscriptstyle(0,U_0)}
\newcommand{\mathd}{\mathrm{d}}
\newcommand{\R}{\mathbb R}
\newcommand{\E}{\mathbb E}
\renewcommand{\P}{\mathbb P}
\newcommand{\cG}{\mathcal{G}}
\renewcommand{\phi}{\varphi}
\newcommand{\Z}{\mathbb Z}
\newcommand{\N}{\mathbb{N}}
\renewcommand{\P}{\mathbb P}
\newcommand {\T}{\mathbb T} 
\newcommand{\X}{\mathcal X}
\newcommand{\0}{\mathbf{0}}
\newcommand{\x}{\mathbf{x}}
\newcommand{\y}{\mathbf{y}}
\newcommand{\z}{\mathbf{z}}
\renewcommand{\v}{\mathbf{v}}
\renewcommand{\k}{\mathbf{k}}
\newcommand{\abs}[1]{\left| #1 \right|}
\newcommand{\1}{\mathbf{1}}
\newcommand{\epsa}{\varepsilon_a}
\newcommand{\epst}{\varepsilon_\theta}
\title{Geometric scale-free random graphs on mobile vertices: broadcast and percolation times}
\author{Peter Gracar$^\dag$\footnote{Corresponding author, E-Mail: p.gracar@leeds.ac.uk}\, and Arne Grauer$^\ddag$\\
{\sl\small$^\dag$University of Leeds, UK}\\
{\sl\small$^\ddag$University of Cologne, Germany}}
\date{}
\theoremstyle{plain} 
\newtheorem{theorem}{Theorem}
\newtheorem{prop}{Proposition}[section]
\newtheorem{lemma}{Lemma}[section]
\newtheorem{assum}{Assumption}
\crefname{theorem}{theorem}{theorems}
\crefname{prop}{proposition}{propositions}
\crefname{corollary}{corollary}{corollaries}
\theoremstyle{definition} 
\newtheorem{mydef}{Definition}[section]
\crefname{mydef}{definition}{defitnisions}
\theoremstyle{remark} 
\newtheorem{remark}{Remark}[section]
\crefname{remark}{remark}{remarks}
\begin{document}

\maketitle
\begin{abstract}
We study the phenomenon of information propagation on mobile geometric scale-free random graphs, where vertices instantaneously pass on information to all other vertices in the same connected component. The graphs we consider are constructed on a Poisson point process of intensity $\lambda>0$, and the vertices move over time as simple Brownian motions on either $\R^d$ or the $d$-dimensional torus of volume $n$, while edges are randomly drawn depending on the locations of the vertices, as well as their a priori assigned marks. This includes mobile versions of the \emph{age-dependent random connection model} and the \emph{soft Boolean model}. We show that in the ultrasmall regime of these random graphs, information is broadcast to all vertices on a torus of volume $n$ in poly-logarithmic time and that on $\R^d$, the information will reach the infinite component before time $t$ with stretched exponentially high probability, for any $\lambda>0$.
\end{abstract}

\newpage
\tableofcontents

\section{Introduction}

In recent years scale-free geometric random graphs have been used extensively to study real world networks, such as social or telecommunication networks. A broad collection of literature has emerged, focusing on different connection rules that lead to such networks, such as for example the spatial graph model with local influence regions \cite{Aiello2008}, scale-free percolation \cite{Deijfen2013,Deprez2019} and more recent work on spatial preferential attachment type models \cite{Gracar2019}. Further work focused on various topological properties of the emerging networks, such as how graph distances scale with respect to the graph size \cite{Dereich2012} (for non-spatial models) or the euclidean distance between two typical vertices (for spatial models), see \cite{Gracar2022a}. A related phenomenon of interest is the percolation probability in such networks, studied for example in \cite{Baumler2023}.

As the body of literature on these graphs grew, a parallel interest in the study of dynamics related them started developing. Many authors have studied the contact process and how it behaves on and spatial and non-spatial versions of such graphs, starting with Chatterjee and Durrett \cite{Chatterjee2009} and more recently Mourrat and Valesin \cite{Mourrat2016}, Linker \emph{et al} \cite{Linker2021} and others \cite{Schapira2021,Gracar2022b}. Further dynamics have also been studied in non-spatial networks, where the contact process is evolving on graphs that are themselves evolving, such as  edges of the graph being randomly resampled, see for example the recent work by Jacob \emph{et al} \cite{Jacob2022}.

In this paper, we approach the question of dynamics from a different direction. Drawing inspiration from the work done by Peres \emph{et al} \cite{Peres2011}, where they considered the Boolean model with mobile vertices, we study properties of geometric scale-free random graphs where the underlying point process is allowed to evolve over time. More precisely, we study scale-free random graphs where edges emerge and disappear due to the motion of the vertices, which individually move as Brownian motions on euclidean space. Particle systems of this kind have been studied with the help of multi-scale analysis techniques on lattices by Kesten and Sidoravicius \cite{Kesten2005}, uniformly elliptic lattices by Gracar and Stauffer \cite{Gracar2019b}, fractal lattices by Drewitz \cite{Drewitz2023} and on euclidean space by Stauffer \cite{Stauffer2015}, with the primary focus on how often the particles ``interact'' with each other by being in close proximity. Recently, Kiwi \emph{et al} \cite{Kiwi2022} applied such a multi-scale framework to hyperbolic graphs, where the location of vertices in hyperbolic space affects not only their relative positions to each other, but also how ``powerful'' the vertices are, i.e. how large their area of influence is.

Our focus is on \emph{geometric random graphs} embedded into euclidean space. Here, the vertices form a Poisson point process on $\R^d$ (or on the $d$-dimensional torus of volume $n$, denoted by $\mathbb{T}_n^d$) of intensity $\lambda>0$ and are each equipped with an independent, uniform on $(0,1)$ random mark. This mark can be thought of as an \emph{inverse weight}, with marks close to $0$ signifying powerful vertices and marks close to $1$ signifying weak vertices. Edges are then drawn between pairs of vertices with probability that depends on the pairs' relative locations and their respective  marks (cf. \Cref{ass:main}), with vertices close to each other and powerful vertices having a higher probability of forming an edge. Unlike in the hyperbolic case studied in  \cite{Kiwi2022}, we consider the mark of a vertex to be an innate property which does not evolve over time. In particular, although all vertices move around independently of their marks and each other, powerful vertices are relatively rare and it therefore takes much longer for areas that happen to be empty of such vertices to ``recover'' and return close to stationarity.

\paragraph{Soft Boolean model.}
Although we prove our results in more generality, we first state them for a motivating example, the \emph{soft Boolean model}. Let each vertex $x$ carry an independent, identically distributed random radius $R_x$, which we assume to be heavy-tailed, i.e. there exists $\gamma\in (0,1)$ such that 
\[
\P(R_x>r) \asymp r^{-d/\gamma}\ \text{as}\ r\to \infty.
\]
In the \emph{hard} version of the model two vertices are connected by an edge if the balls centered at the vertices locations with associated radii intersect. We consider a \emph{soft} version of this model, where an independent, identically distributed random variable $X(x,y)$ is assigned to each unordered pair of vertices $\{x,y\}$. Then, an edge is formed between two vertices $x$ and $y$ if and only if
\begin{equation}
\frac{\abs{x-y}}{R_x+R_y}\leq X(x,y).\label{eq:boolean}
\end{equation}
The choice $X(x,y)=1$, for all pairs $\{x,y\}$, corresponds to the hard version of the model. Writing $X$ for an independent identically distributed copy of $X(x,y)$, we assume $X$ to be heavy-tailed with decay
\[
\P(X>r) \asymp r^{-\delta d}\ \text{as}\ r\to \infty,
\]
for some $\delta>1$, leading to a relaxation of the condition to form an edge which can be interpreted in the following way. For any pair of vertices $x$ and $y$ we take a copy of their corresponding balls and expand those by multiplying both radii with $X(x,y)$. Then, we form an edge between the two vertices if and only if the expanded balls intersect.

\paragraph{Mobile soft Boolean model.}
We next introduce dynamics into the model. To that end, we let each vertex $x$ move over time independently as a Brownian motion. We do not modify the random radius $R_x$ over time, but we do resample the random variables $X(x,y)$ at unit interval times. One can think of this as the signal between two vertices getting ``boosted'' by a random amount, determining whether a connection between the pair can established or not. Importantly, whereas this boosting changes at discrete times, the rest of the model still evolves in continuous time, be it the movement of vertices or appearance/disappearance of edges according to \eqref{eq:boolean}.  See \Cref{fig:evolution} for an example of how edges can appear/disappear over time.

\begin{figure}[!ht]
	\begin{center}
	\begin{subfigure}[b]{.495\linewidth}
			\begin{tikzpicture}[scale=0.25, every node/.style={scale=0.7}]
							\node (A) at (-2,3)[circle, fill = black ,label={left:$x_1(s-)$}] {};
							\node (B) at (5.5,10)[circle, fill = black, label={left:$x_2(s-)$}] {};
							\draw[draw = lightgray, dashed] (A) circle (3.5cm);
							\draw[draw = lightgray, dashed] (B) circle (4cm);
							\draw[draw = red] (A) circle (4.2cm);
							\draw[draw = red] (B) circle (4.8cm);
							\draw[draw = red] (A) -- + (-60:4.2cm) node[right, text = red] {$X(\mathbf{x_1},\mathbf{x_2})R_{x_1}$};
							\draw[draw = red] (B) -- + (120:4.8cm) node[left, text = red] {$X(\mathbf{x_1},\mathbf{x_2})R_{x_2}$};
			\end{tikzpicture}
			\subcaption{Situation just before time $s$.}\label{fig:evolution1}
	\end{subfigure}\hfill
	\begin{subfigure}[b]{.495\linewidth}
			\begin{tikzpicture}[scale=0.25, every node/.style={scale=0.7}]
							\node (A) at (-2,3)[circle, fill = black ,label={left:$x_1(s)$}] {};
							\node (B) at (5.5,10)[circle, fill = black, label={left:$x_2(s)$}] {};
							\draw[draw = lightgray, dashed] (A) circle (3.5cm);
							\draw[draw = lightgray, dashed] (B) circle (4cm);
							\draw[draw = red] (A) circle (5.25cm);
							\draw[draw = red] (B) circle (6cm);
							\draw[draw = red] (A) -- + (-60:5.25cm) node[right, text = red] {$X(\mathbf{x_1},\mathbf{x_2})R_{x_1}$};
							\draw[draw = red] (B) -- + (120:6cm) node[left, text = red] {$X(\mathbf{x_1},\mathbf{x_2})R_{x_2}$};
							\draw[draw = red, thick] (A) to (B);
			\end{tikzpicture}
			\subcaption{Situation at time $s$.}
	\end{subfigure}
	
	\vspace{1cm}
	\begin{subfigure}[b]{.495\linewidth}
			\begin{tikzpicture}[scale=0.25, every node/.style={scale=0.7}]
							\node (A) at (-5,2)[circle, fill = black ,label={left:$x_1(t-)$}] {};
							\node (B) at (5.7,7)[circle, fill = black, label={left:$x_2(t-)$}] {};
							\node (C) at (-2,3)[circle, fill = lightgray ,label={right:$x_1(s)$}] {};
							\node (D) at (5.5,10)[circle, fill = lightgray, label={right:$x_2(s)$}] {};
							\pgfmathsetseed{3};
							\draw[decorate,decoration={random steps,segment length=1pt,amplitude=3pt}] (A) to (C);
							\draw[decorate,decoration={random steps,segment length=1pt,amplitude=3pt}] (B) to (D);
							\draw[draw = lightgray, dashed] (A) circle (3.5cm);
							\draw[draw = lightgray, dashed] (B) circle (4cm);
							\draw[draw = red] (A) circle (5.25cm);
							\draw[draw = red] (B) circle (6cm);
							\draw[draw = red] (A) -- + (-60:5.25cm) node[right, text = red] {$X(\mathbf{x_1},\mathbf{x_2})R_{x_1}$};
							\draw[draw = red] (B) -- + (120:6cm) node[left, text = red] {$X(\mathbf{x_1},\mathbf{x_2})R_{x_2}$};
			\end{tikzpicture}
			\subcaption{Situation just before time $t:=s+1$.}
	\end{subfigure}\hfill
	\begin{subfigure}[b]{.495\linewidth}
			\begin{tikzpicture}[scale=0.25, every node/.style={scale=0.7}]
							\node (A) at (-5,2)[circle, fill = black ,label={left:$x_1(t)$}] {};
							\node (B) at (5.7,7)[circle, fill = black, label={left:$x_2(t)$}] {};
							\node (C) at (-2,3)[circle, fill = lightgray ,label={right:$x_1(s)$}] {};
							\node (D) at (5.5,10)[circle, fill = lightgray, label={right:$x_2(s)$}] {};
							\pgfmathsetseed{3};
							\draw[decorate,decoration={random steps,segment length=1pt,amplitude=3pt}] (A) to (C);
							\draw[decorate,decoration={random steps,segment length=1pt,amplitude=3pt}] (B) to (D);
							\draw[draw = lightgray, dashed] (A) circle (3.5cm);
							\draw[draw = lightgray, dashed] (B) circle (4cm);
							\draw[draw = red] (A) circle (4.55cm);
							\draw[draw = red] (B) circle (5.2cm);
							\draw[draw = red] (A) -- + (-60:4.55cm) node[right, text = red] {$X(\mathbf{x_1},\mathbf{x_2})R_{x_1}$};
							\draw[draw = red] (B) -- + (120:5.2cm) node[left, text = red] {$X(\mathbf{x_1},\mathbf{x_2})R_{x_2}$};
			\end{tikzpicture}
			\subcaption{Situation at time $t:=s+1$.}
	\end{subfigure}
	
	\caption{An example of the evolution of $G_t$ illustrated with two vertices, for integer times $s$ and $t:=s+1$. The grey dashed circles have the random but fixed radii $R_{x_1}$ and $R_{x_2}$. 
	In (a) the random variable $X(\x_1,\x_2)$ has not been resampled yet and $\tfrac{\abs{x_1(s-)-x_2(s-)}}{R_{x_1}+R_{x_2}}\geq X(\x_1,\x_2)$, resulting in no edge between the vertices. In (b) the random variable $X(\x_1,\x_2)$ has been resampled, resulting in the edge between $\x_1$ and $\x_2$ being drawn. In (c), the vertices have moved far enough for the edge to no longer exist (note that $X(\x_1,\x_2)$ has not yet been resampled again). Finally, in (d), $X(\x_1,\x_2)$ has once more been resampled, with no edge being drawn as a consequence.}\label{fig:evolution}
	\end{center}
\end{figure}
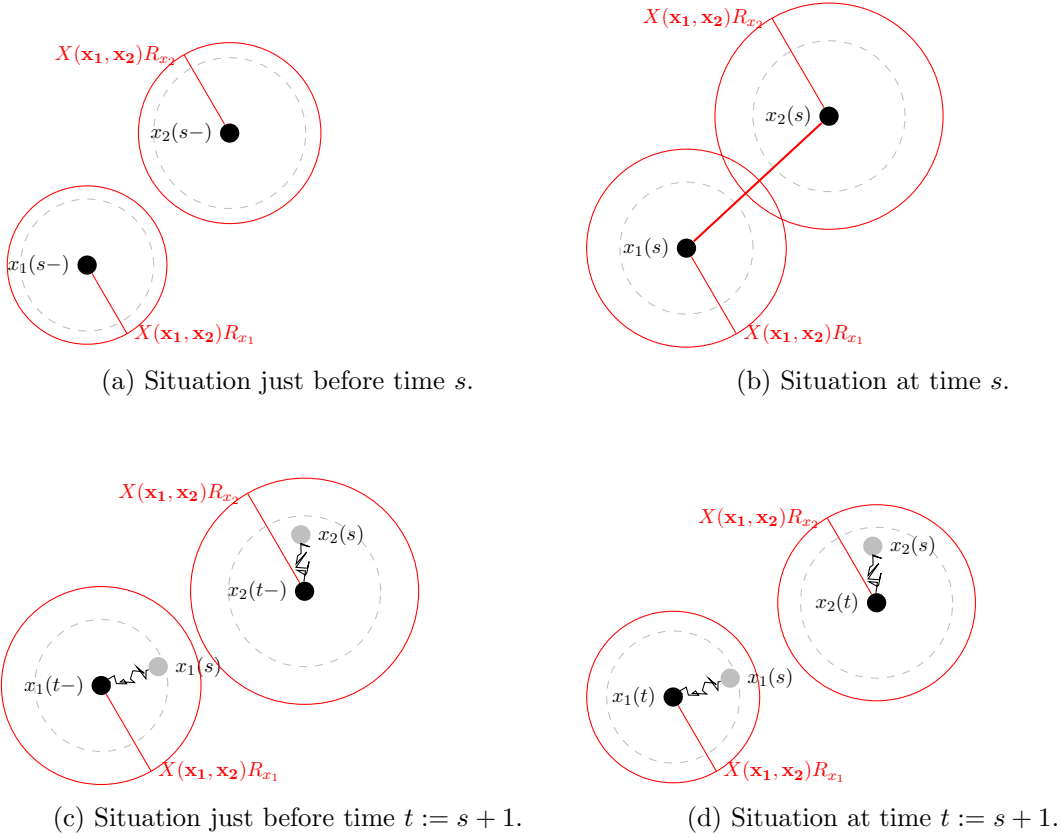

\paragraph{Broadcast time.}
In this setup we now study the spread of information. Consider first the mobile soft Boolean model on the $\mathbb{T}_n^d$, where at time $t=0$, an arbitrary vertex (without loss of generality located at the origin) starts broadcasting information. This information is immediately transmitted to all vertices in the same connected component (i.e. vertices for which a path to the origin vertex exists), upon which the vertices start broadcasting the information as well. We are interested in the time it takes for the information to reach every single vertex on the torus. 
Denoting this  \emph{broadcast time} time by $T_{\text{bc}}$, we prove in \Cref{thm:flooding} that for $n$ large enough and if $\gamma>\frac{\delta}{\delta+1}$, information will reach every vertex on the torus $\mathbb{T}_n^d$ in poly-logarithmic time with high probability, for any positive particle intensity.

\paragraph{Percolation time.}
Next, consider the mobile soft Boolean model on $\R^d$. As above, let at time $t=0$ information start broadcasting from an arbitrary vertex, with the re-broadcasting rules the same as before. We are now interested in how long it takes for this information to reach the infinite connected component of the graph, which exists almost surely whenever $\gamma>\frac{\delta}{\delta+1}$, see \cite{Gracar2021}. We denote this by $T_{\text{perc}}$ and call it the \emph{percolation time}. We prove in \Cref{thm:percolation}, that with probability stretched exponentially close to $1$ in $t$, the percolation time is smaller than $t$.

While the above two results might not appear too surprising given what is known for the hard Boolean model with fixed radii covered by \cite{Peres2011}, note that the soft Boolean model introduces an additional source of randomness arising from the random variables $X(x,y)$ and more generally from the random occurrence of edges according to \Cref{ass:main} (cf. \Cref{sec:setup}). Furthermore, the fixed nature of the random radii over time introduces additional correlations into the model that could slow down the propagation of information, requiring significant work to overcome them.

\paragraph{Further examples of scale-free geometric random graphs}
In the \emph{age-dependent random connection model} each vertex carries a uniform on $(0,1)$ distributed birth time and two vertices $x$ and $y$ with birth times $t$ and $s$ are connected by an edge independently with probability
\begin{equation}
\varphi\big(\beta^{-1}(t\wedge s)^\gamma (t\vee s)^{1-\gamma}\abs{x-y}^d\big), \label{eq:conn_prob}
\end{equation}
where $\gamma\in (0,1)$, $\beta>0$ and $\varphi:(0,\infty)\to [0,1]$ is a non-decreasing function which we assume to satisfy $\varphi(r) \asymp r^{-\delta}$ as $r\to \infty$. This model emerges as a limit graph of a rescaled version of the \emph{age-based preferential attachment model} introduced in \cite{Gracar2019}. In this model vertices are added to the graph at rate of a Poisson process with unit intensity and placed on a torus of width one. A new vertex $x$ added at time $t$ forms an edge to each existing vertex $y$ with probability given by \eqref{eq:conn_prob}, where $s$ is the time vertex $y$ has been added to the graph. As $(t/s)^\gamma$ is the asymptotic order of the expected degree at time $t$ of a vertex with birth time $s$ the age-based preferential attachment model mimics the behaviour of spatial preferential attachment networks introduced in \cite{Jacob2015}.

This class of geometric random graphs has been studied recently and  exhibits a phase transition in the parameters $\gamma$ and $\delta$ such that these graphs are ultrasmall, i.e. two very distant vertices have a graph distance of doubly logarithmic order of their Euclidean distance, if and only if $\gamma>\frac{\delta}{\delta+1}$, as shown in \cite{Gracar2022a}. The same regime boundary depending on the parameters $\gamma$ and $\delta$ is shown to appear in the existence of a subcritical percolation phase by Gracar et al. \cite{Gracar2019}, giving a hint of a universal behaviour of these geometric random graphs that is remarkably different to the behaviour of spatial graph models investigated in \cite{Deijfen2013}. In this work we consider the class of geometric random graphs in their ultrasmall regime $\gamma>\frac{\delta}{\delta+1}$, evolving over time through vertex motion and edge updating.

\paragraph{Structure of the paper.}
The remainder of this paper is structured as follows. In \Cref{sec:setup} we formalise the setup and state \Cref{thm:flooding,thm:percolation}, the two main results of this paper. In \Cref{sec:technical}, we prove several technical results that will be required to prove the two theorems. More precisely, \Cref{subsec:vertexDensity} establishes the behaviour of the underlying vertices over time, while \Cref{subsec:evenlySpread} covers how under these conditions, well-behaved connected components can be constructed. \Cref{sec:proofs} then concludes by using these results to prove \Cref{thm:flooding,thm:percolation}.
\section{Setup and key results}\label{sec:setup}

\subsection{Geometric scale-free random graphs}

Let $\mathbb{T}_n^d$ the $d$-dimensional euclidean torus of volume $n$ and
let $\cG$ be a geometric random graph on a vertex set given by a Poisson point process $\mathcal{X}$ of intensity $\lambda$ on either $\R^d\times(0,1)$ or $\mathbb{T}_n^d\times(0,1)$. We write $\x=(x,u)$ for a vertex of the graph where we refer to $x$ as the location and $u$ as the mark of the vertex $\x$. For $\x_1,\dots,\x_n\in\R^d\times(0,1)$ (resp.\ $\mathbb{T}_n^d\times(0,1)$), denote by $\P_{\x_1,\dots,\x_n}$ the law of $\cG$ given the event that $\x_1,\dots,\x_n$ are points of the Poisson process. We consider geometric random graphs which satisfy the following assumption on the probability of the occurrence of edges in the graph. This assumption is given in terms of two parameters $\gamma\in (0,1)$ and $\delta>1$. For two vertices $\x,\y\in \mathcal{X}$ we write $\x\sim \y$ if there exists an edge between them.

\begin{assum}\label{ass:main}
	Given $\mathcal{X}$, edges are drawn independently of each other and there exist $\alpha,\kappa_1>0$ such that, for every pair of vertices $\x=(x,u), \y=(y,v)\in \mathcal{X}$, it holds
	\[
		\P_{\x,\y}(\x\sim \y)\geq \alpha\, \big(1\wedge \kappa_1 \, (u \wedge v)^{-\delta\gamma} \abs{x-y}^{-\delta d}\big)
	\]
	where $|\cdot|$ is the euclidean distance on $\R^d$ (resp.\ $\T^d$). Furthermore, $\P_{\x,\y}(\x\sim \y)$ is such that it holds
	\[
		\E_{\x}|\{\y\in\X: \y\sim \x\}|<\infty.
	\]
\end{assum}
\begin{remark}
	The second condition of \Cref{ass:main} guarantees that we are working with finite degree graphs. An condition on $\P_{\x,\y}(\x\sim \y)$ that ensures this is, for example
	\[
		\P_{\x,\y}(\x\sim \y)\geq \kappa_2 \, (u\wedge v)^{-\delta\gamma} (u\vee v)^{\delta(\gamma-1)}\abs{x-y}^{-\delta d}
	\]
	for some constant $\kappa_2>0$. Note that both of the afore mentioned geometric random graphs, the soft Boolean model and the age-dependent random connection model satisfy this inequality, as well as the lower bound from \Cref{ass:main}.
\end{remark}

We will work with the Palm versions of $\X$ and $\mathcal{G}$. More precisely, we add to $\mathcal{X}$ a vertex $(0,U_0)$, where $U_0$ is an independent on the interval $(0,1)$ uniformly distributed random variable and denote by $\mathcal{G}_{\Palm}$ the resulting graph on $\mathcal{X}\cup \{(0,U_0)\}$ determined by the connection rules satisfying Assumption \ref{ass:main}. We denote the law of $\mathcal{G}_{\Palm}$ by $\P_{\Palm}$ and since $U_0$ is independent of the underlying Poisson point process $\mathcal{X}$, it holds $\P_{\Palm} = \P_{(0,u_0)}\mathd u_0$. We can think of $\P_{\Palm}$ of the law of $\mathcal{G}$ conditioned on the existence of a typical vertex, i.e. a vertex with typical mark, at the origin.

We will denote by $\mathcal{C}_\infty$ the infinite connected component of $\mathcal{G}$ should it exists. It is known (see \cite{Gracar2021}) that if $\gamma>\frac{\delta}{\delta+1}$, such a component exists almost surely and is unique for any particle intensity of $\lambda>0$. Furthermore, as shown in \cite{Jorritsma2023a}, all of the remaining connected components are of finite size. When restricted to a cube of volume $n$ on $\R^d$ or when working on the torus $\mathbb{T}_n^d$, \cite{Jorritsma2023a} also gives that with high probability and for sufficiently large $n$, there exists a unique \emph{giant component}, i.e. a connected component of linear size in the volume of the cube/torus, while all other connected components are of logarithmic size. In all of the above cases, we will denote by $\mathcal{C}(\x)$ the connected component containing the vertex $\x$ and by $|\mathcal{C}(\x)|$ its size, i.e. the number of vertices belonging to it. 

We will write $Q_K$ for a cube of volume $K$, centred around the origin. We might occasionally consider translates of $Q_K$, but since we will not be interested in them beyond the number that a larger cube can be tessellated into, we do not introduce any specific notation for them and the locations they are centred around. For $\x\in\R^d\times(0,1)$ (or equivalently $\mathbb{T}_n^d\times(0,1)$) will denote by $B(\x,r)$ the ball of radius $r\in\R_+$, centred at $x\in\R^d$. On the torus, $n$ will always be assumed to be significantly larger than $r$ so as to avoid any boundary effects.

\subsection{Mobile geometric scale-free random graphs}

We allow the vertices underpinning the graph to move over time. To that end, denote by $\X_0:=\X$ the intensity $\lambda$ Poisson point process of vertices with marks, at time $0$. Let each vertex of $\X_0$ move as an independent Brownian motion on $\R^d$; the mark of the vertex remains unchanged. Define $\X_t$ to be the point process obtained after the vertices of $\X_0$ have moved for time $t$. More precisely, for each $\x=(x,u)\in\X_0$, let $(\pi_{\x}(t))_{t\geq 0}$ be a standard Brownian motion on $\R^d$ and define $\X_t=\{(x+\pi_{\x}(t),u):\x\in \X_0\}$. We also introduce the shorthand notation $\x_t:=(x+\pi_{\x}(t),u)$ for $t\geq 0$. It is easy to check that $\X_t$ remains a Poisson point process of intensity $\lambda$ on $\R^d\times(0,1)$ (resp. $\mathbb{T}_n^d\times(0,1)$). That being said, $\X_t$ and $\X_0$ are not independent of each other and due to the fixed marks, even the classical mixing results do not apply directly. 

We will work for the remainder of this paper on the Palm version of the process, i.e. we assume that in $\X_0$ there exists a vertex with a mark uniformly distributed on $(0,1)$ at the origin. We will refer to this vertex as the \emph{origin vertex} and denote it by $\0$, even though almost surely, other than at time $0$, the vertex is not located at the origin of $\R^d$ (resp. $\mathbb{T}_n^d$).

In analogy to the static case, we define $\cG_t$ to be the general geometric random graph on the vertex set $\X_t$. Crucially, we still require the marginal distribution of $\cG_t$ to satisfy \Cref{ass:main} for every $t\geq 0$. We do not however assume in general that the events $\{\x_{t_1}\sim\y_{t_1}\}$ and $\{\x_{t_2}\sim\y_{t_2}\}$ are conditionally on $\x_{t_1}, \x_{t_2}, \y_{t_1}, \y_{t_2}$ independent of each other. Instead, we sample for every pair of vertices $\x,\y$ in $\X_0$ a sequence of uniform on $(0,1)$ random variables $(U_{\x,\y}^n)_{n\in \N_0}$ and form an edge between $\x$ and $\y$ at time $0$ if $U_{\x,\y}^0<\P_{\x,\y}(\x\sim \y)$. At time $t> 0$, we assume an edge exists between $\x_t$ and $\y_t$ if $U_{\x,\y}^n<\P_{\x_t,\y_t}(\x_t\sim \y_t)$ where $n$ is the unique integer satisfying $n\leq t<n+1$. By a standard coupling argument, this construction yields the desired marginal distributions of the graphs $\cG_t$. We do note that if $t_1,t_2\in[n,n+1)$ for some $n\in\N_0$, then even conditionally on $\x_{t_1}, \x_{t_2}, \y_{t_1}, \y_{t_2}$ the events $\{\x_{t_1}\sim\y_{t_1}\}$ and $\{\x_{t_2}\sim\y_{t_2}\}$ are not independent. If on the other hand such an $n\in\N_0$ does not exist (i.e., there exists some $n\in\N$ such that $t_1< n\leq t_2$ or $t_2< n\leq t_1$), then conditionally on $\x_{t_1}, \x_{t_2}, \y_{t_1}, \y_{t_2}$ the events $\{\x_{t_1}\sim\y_{t_1}\}$ and $\{\x_{t_2}\sim\y_{t_2}\}$ are in fact independent.

\begin{remark}
	The above defined edge updating mechanism is crucial to allow the dynamic graph to ``recover'' from unfavourable configurations of $\X$ and the edge marks at suitable time scales. One could (and we will in our future work) consider alternative edge mark updating mechanisms, such as updating individual edge marks after exponentially distributed random times, but this would (up to constants) not alter the evolution of the process. If the update times were to occur at longer time scales, we conjecture this would lead to a drastically different process evolution, as seen for example in \cite{Jacob2019,Jacob2022}.
\end{remark}
	We also extend the definitions of $\mathcal{C}_\infty$ and $\mathcal{C}(\x)$ to the mobile setup. To that end, we write $\mathcal{C}_\infty^t$ for the (unique) infinite connected component of $\mathcal{G}_t$ and $\mathcal{C}_t(\x)$ for the connected component containing the vertex $\x$ at time $t$ in $\mathcal{G}_t$. Note that the latter is again well defined both on $\R^d\times(0,1)$, as well as in the torus case $\mathbb{T}_n^d\times(0,1)$. As before, we denote by $|\mathcal{C}_t(\x)|$ its size.
	
\subsection{Main results}
Consider a mobile geometric scale-free random graph on the $d$-dimensional torus of volume $n$ and recall that we denote by $\lambda$ the intensity of the Poisson point process of vertices.
Suppose a message originates from the origin vertex $\0$, and every vertex that has already received the message broadcasts it continuously to all vertices in the same connected component. Denote by $T_{\text{bc}}$ the time until all vertices have received the message. 
Assume the graph satisfies \Cref{ass:main}. We then have the following result.

\begin{theorem}\label{thm:flooding}
	Let $\gamma>\frac{\delta}{\delta+1}$. On the $d$-dimensional torus of volume $n$ with $d\geq 1$, the broadcast time $T_{\text{bc}}$ is with high probability $O(\log n(\log\log n)^\epsilon)$ for any $\epsilon>0$ and any vertex intensity $\lambda>0$.
\end{theorem}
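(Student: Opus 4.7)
The plan is to discretise time at integer points $t = 0, 1, \ldots, T$ with $T = C \log n (\log\log n)^\eps$ and to reduce the theorem to two high-probability events. Let $\mathcal{D}_t$ denote the (w.h.p.\ unique) linear-size giant component of the snapshot graph $\cG_t$ on $\mathbb{T}_n^d$, whose existence follows from \cite{Jorritsma2023a} in the ultrasmall regime $\gamma > \delta/(\delta+1)$. The two events are: $(E_1)$ the giant is temporally stable, i.e. $\mathcal{D}_s \cap \mathcal{D}_{s+1} \neq \emptyset$ for every integer $s < T$; and $(E_2)$ for every vertex $v$ of the Poisson process, each of the two sub-intervals $[0, T/2]$ and $[T/2, T]$ contains an integer time $s$ with $v_s \in \mathcal{D}_s$. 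On $E_1 \cap E_2$: applied to $v = \0$ in the first sub-interval, $E_2$ gives an integer time $s_0 \in [0, T/2]$ with $\0_{s_0} \in \mathcal{D}_{s_0}$, at which moment the message enters the giant; by $E_1$ the message then persists in the giant at every later integer time, since some vertex of $\mathcal{D}_s \cap \mathcal{D}_{s+1}$ always carries it forward. For every other vertex $v$, $E_2$ gives $s_v \in [T/2, T]$ with $v_{s_v} \in \mathcal{D}_{s_v}$; since $s_0 \leq T/2 \leq s_v$, the giant contains the message at time $s_v$, and so $v$ receives it then. Hence $T_{\mathrm{bc}} \leq T$ on $E_1 \cap E_2$, since transmission within each connected component is instantaneous.

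For $E_1$: the giant at each $t$ is anchored on a backbone of low-mark vertices whose large spatial influence regions barely change in one unit of time. The vertex-density estimates of \Cref{subsec:vertexDensity} combined with the well-behaved-component constructions of \Cref{subsec:evenlySpread} show that two consecutive giants share a positive fraction of backbone vertices with failure probability $n^{-\omega(1)}$ per pair of consecutive times, and a union bound over the $T$ integer times then closes $E_1$.

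For $E_2$, which is the main obstacle: at each fixed $t$ the marginal law of $\cG_t$ satisfies \Cref{ass:main}, so \cite{Jorritsma2023a} yields that a Palm vertex with mark $u$ lies in $\mathcal{D}_t$ with some strictly positive probability $p(u)$. If the per-snapshot events were independent across integer times, the failure probability for a fixed vertex over a window of length $T/2$ would be $(1 - p(u))^{T/2}$, and a union bound over the $\Theta(n)$ vertices would close $E_2$ for $T = O(\log n)$. Two complications must be overcome: (i) the marks of vertices are \emph{frozen for all time}, producing positive correlations between per-snapshot events at different integer times; and (ii) $p(u)$ decays as $u \to 1$, with the worst-case mark on the torus being $u_{\max} = 1 - \Theta(1/n)$. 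Complication (i) is handled by exploiting the two sources of fresh randomness in disjoint unit intervals --- the independent Brownian increments of $(\pi_{\x})$ and the edge-mark variables $U^k_{\x,\y}$ re-sampled at each integer step --- combined with the well-behaved-component constructions of \Cref{subsec:evenlySpread} to obtain approximate conditional independence between per-snapshot events. Complication (ii) is handled by a careful bookkeeping of how $p(u)$ decays; the slack factor $(\log\log n)^\eps$ is precisely the cost of ensuring that the union bound over vertices with marks in the extreme range $(1 - \Theta(1/n), 1)$ still closes.
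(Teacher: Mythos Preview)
Your two-event decomposition ($E_1$: temporal stability of the giant; $E_2$: every vertex meets the giant in each half of $[0,T]$) and the deduction $T_{\mathrm{bc}}\le T$ on $E_1\cap E_2$ match the paper's architecture. The gap is in your diagnosis of the bottleneck.

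Complication (ii) does not exist. Under \Cref{ass:main} the lower bound on $\P_{\x,\y}(\x\sim\y)$ depends only on the \emph{smaller} mark $u\wedge v$, so a vertex with mark arbitrarily close to $1$ still connects to low-mark vertices with probability bounded away from zero. The paper's \Cref{lemma:belonging} makes this explicit: the probability $p$ that a given vertex joins the local distinguished subgraph is bounded below \emph{uniformly in the mark}. Consequently the per-vertex failure probability in $E_2$ is at most $(1-p)^{(1/2-\epsilon)T}$ regardless of mark, and the union bound over $\Theta(n)$ vertices closes already with $T=C\log n$ for $C$ large. The $(\log\log n)^\eps$ slack is not spent on high-mark vertices.

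Where it \emph{is} spent is a step you elide. The paper does not work with the global giant directly: it tessellates $\mathbb{T}_n^d$ into $n/t^d$ cubes of side $t$, builds in each a distinguished subgraph of size $\Theta(t^d)$ via \Cref{prop:alpha_dense,prop:large_component}, and then invokes the stretched-exponential bound $\P(|C_n^{(2)}|\ge k)\le (n/c)\exp\{-ck^{\zeta_{hh}}\}$ from \cite{Jorritsma2023a} with $k=\Theta(t^d)$ to place each distinguished subgraph inside the giant. The stretched exponent $\zeta_{hh}$, together with the union bound over cubes and time steps, is what forces $t$ slightly beyond $C\log n$. Your sketch of $E_1$ is also off: the paper does not argue that influence regions barely change, but uses sprinkling (\Cref{lemma:shared_vertex}) --- a fresh vertex from an independent thinned process $\X^\varepsilon$ connects to the distinguished subgraphs at both consecutive times, with the integer-time edge-resampling supplying the conditional independence.
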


Consider now a mobile geometric scale-free random graph on $\R^d$ instead. As above, assume that a message originates at the origin vertex $\0$. We define the percolation time as
\[
	T_{\text{perc}}:=\inf\{t\geq 0:\exists \x\in\mathcal{C}_\infty^t \text{ s.t. }\x\sim\0\},
\]
that is the first time the origin vertex $\0$ belongs to the infinite connected component of $\mathcal{G}_t$. Suppose that the graph satisfies \Cref{ass:main}. We then have the following result.
\begin{theorem}\label{thm:percolation}
	Let $\gamma>\frac{\delta}{\delta+1}$. Then there exists a constant $c>0$ such that on the $\R^d$ with $d\geq 1$, the percolation time $T_{\text{perc}}$ satisfies
	\[
		\P(T_{\text{perc}}>t)\leq\exp\{-ct^{1/c}\},
	\]
	for any vertex intensity $\lambda>0$ and any $t>0$ sufficiently large.
\end{theorem}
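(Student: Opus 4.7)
The plan is to reduce the statement to a quantitative version of \Cref{thm:flooding}. Fix $t$ large and choose $K=K(t)$ to be the largest integer for which the poly-logarithmic broadcast bound $C\log K(\log\log K)^\epsilon$ of \Cref{thm:flooding} is at most $t/2$; this yields $K$ growing roughly as $\exp(t/(\log t)^\epsilon)$. The aim is to show that, with probability at least $1-\exp(-ct^{1/c})$, the origin vertex $\0$ lies in $\mathcal{C}_\infty^s$ for some $s\leq t$.

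First, I would couple the mobile graph $\mathcal{G}_s$ restricted to a box $Q_{2K}$ around the origin with a torus mobile graph $\widetilde{\mathcal{G}}_s$ on $\mathbb{T}_K^d$ obtained by identifying points of $Q_K$ modulo the torus. Brownian displacements up to time $t$ are of order $\sqrt{t}\ll K^{1/d}$ by the choice of $K$, so all vertices of interest remain well inside $Q_K$. Using the vertex density control from \Cref{subsec:vertexDensity} and the polynomial tail $|x-y|^{-\delta d}$ of \Cref{ass:main}, the probability that any edge of length $\gtrsim K^{1/d}$ is ever incident to $\mathcal{C}_s(\0)$ during $[0,t]$ can be bounded by $\exp(-c(\log K)^a)$ for some $a>0$. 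On the complement of this event, the trajectories of $\mathcal{C}_s(\0)$ agree under the two graphs.

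Next, I would apply \Cref{thm:flooding} in the quantitative form that should fall out of its proof in \Cref{sec:proofs}: on $\mathbb{T}_K^d$, the event $\{T_{\text{bc}}\leq t/2\}$ fails with probability at most a stretched exponential in $\log K$. On the intersection of these good events, $\0$ belongs at time $t/2$ to the unique torus giant component. To transfer this statement back to $\R^d$, I would invoke the static results of \cite{Jorritsma2023a} together with the vertex density estimates of \Cref{subsec:evenlySpread} to show that, up to a failure probability of the same order, the torus giant at time $t/2$ contains a vertex that under the coupling also belongs to $\mathcal{C}_\infty^{t/2}$. This yields $T_{\text{perc}}\leq t/2\leq t$, and substituting $\log K\asymp t/(\log t)^\epsilon$ into $\exp(-c(\log K)^a)$ gives $\exp(-c't^{1/c'})$ after adjusting constants.

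The principal obstacle will be the coupling step together with extracting quantitative rates. Because the connection kernel decays only polynomially in $|x-y|$, arbitrarily long edges occur with positive probability, and under the Palm measure the origin's mark $U_0$ is uniform and hence possibly small, making $\0$ itself prone to long-range connections. Controlling the effect of such edges on the evolution of $\mathcal{C}_s(\0)$ will require a careful union bound driven by the density estimates of \Cref{subsec:vertexDensity}. Moreover, several ``with high probability'' statements underpinning \Cref{thm:flooding} need to be re-expressed as quantitative inequalities so that their failure probabilities contract appropriately when transported through the choice $K=K(t)$, which I expect to be the most technical part of the proof.
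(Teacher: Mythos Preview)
Your proposal takes a genuinely different route from the paper, and that detour introduces real difficulties that the paper's direct argument avoids entirely.

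The paper does \emph{not} reduce \Cref{thm:percolation} to \Cref{thm:flooding}. Instead it works on $\R^d$ throughout, with the cube $Q_{L^d}$ of side length $L=t$. It combines the density estimate (\Cref{prop:alpha_dense}), the existence of the distinguished subgraph (\Cref{prop:large_component}), and \Cref{lemma:belonging} to conclude that at each of roughly $(1-\epsilon)t$ good time steps, the origin connects to the distinguished subgraph with probability at least $p>0$ bounded away from zero. A separate input from \cite[Theorem~2.2]{Jorritsma2023a} shows that a subgraph of size $b\cdot L^d$ fails to lie inside $\mathcal{C}_\infty$ only with stretched-exponentially small probability. Multiplying $(1-p)^{(1-\epsilon)t}$ and the various error terms gives the bound directly. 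No torus, no coupling, no long-edge control is needed.

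Your plan, by contrast, requires coupling the $\R^d$ graph inside $Q_{2K}$ with a torus graph on $\mathbb{T}_K^d$. You correctly flag this as the principal obstacle, and it is a serious one: because the kernel decays only as $|x-y|^{-\delta d}$, the expected number of edges crossing the boundary of $Q_K$ at a single time step is polynomially large in $K$, not small, so the ``no long edge incident to $\mathcal{C}_s(\0)$'' event you want cannot be obtained by a naive union bound. You would need to argue instead that such edges do not change whether $\0$ reaches the giant, which is essentially as hard as the original problem. There is also a logical slip: $T_{\text{bc}}\leq t/2$ on the torus tells you the message has reached every vertex, not that $\0$ itself sits in the giant at time $t/2$; what you actually need is the intermediate statement from the proof of \Cref{thm:flooding} that $\0$ joins the giant at some time in $[0,t/2]$, and once you are extracting that, you are effectively rerunning the direct argument anyway. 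The paper's approach sidesteps all of this by never leaving $\R^d$ and by asking only whether $\0$ connects to a single large local structure that is, with high probability, already part of $\mathcal{C}_\infty$.
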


\section{Technical results}\label{sec:technical}

We begin by proving several technical results that will give us control over how vertices of various power behave over time. This in turn will let us construct subgraphs with favourable geometric properties to ensure information can propagate further.
\subsection{Vertex density over time}\label{subsec:vertexDensity}
The proofs in this subsection are inspired by constructions introduced in \cite{Peres2011}, where the authors studied particle motion and how it affects the emergence of infinite components in the Poisson-Boolean graph model. We note that the introduction of (fixed) vertex marks into the problem presents the biggest difficulty in adapting the technique, in particular since the intensity of the most powerful vertices being considered at any given scale decreases exponentially fast with the size of the region in question. Despite this, we are still able to take advantage of the following mixing time/decoupling time result from \cite{Peres2011}.

\begin{prop}\label{prop:mixing}
	Fix $K>\ell>0$ and consider the cube $Q_{K^d}\subset \R^d$ tessellated into subcubes of side length $\ell$. Let $\Pi_0$ be an arbitrary point process at time $0$ that contains at least $\beta\ell^d$ vertices in each subcube of the tessellation for some $\beta>0$. Let $\Pi_{\Delta}$ be the point process obtained at time $\Delta$ from $\Pi_0$ after the vertices have moved according to a standard Brownian motion for time $\Delta$. Fix $\epsilon\in(0,1)$ and let $\Psi$ be an independent Poisson point process of intensity $(1-\epsilon)\beta$ on $\R^d$. Then there exists a coupling of $\Psi$ and $\Pi_{\Delta}$ and constants $c_1,c_2,c_3$ that depend on $d$ only, such that if $\Delta\geq \frac{c_1\ell^2}{\epsilon^2}$ and $K'\leq K-c_2\sqrt{\Delta\log\epsilon^{-1}}>0$, the vertices of $\Psi$ are a subset of the vertices $\Pi_{\Delta}$ inside the cube $Q_{K'}$ with probability at least
	\[
		1-\frac{K^d}{\ell^d}\exp\{-c_3\epsilon^2\beta\Delta^{d/2}\}.
	\]
\end{prop}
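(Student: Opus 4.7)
The proof rests on Brownian smoothing. After time $\Delta \geq c_1\ell^2/\epsilon^2$ each particle has spread out according to the Gaussian kernel $g_\Delta(z) := (2\pi\Delta)^{-d/2}\exp(-|z|^2/(2\Delta))$ of length scale $\sqrt{\Delta} \gg \ell$, so that $\Pi_\Delta$ locally resembles a Poisson process of intensity $\beta$. The first step is to establish the pointwise intensity bound
$\nu(y) := \sum_{x_0 \in \Pi_0 \cap Q_{K^d}} g_\Delta(y - x_0) \geq (1-\epsilon/2)\beta$
for every $y \in Q_{K'}$. Grouping the at least $\beta\ell^d$ starting vertices per subcube gives a lower Riemann sum for $\beta \int_{Q_{K^d}} g_\Delta(y - x)\,\mathd x$. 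The discretisation error is bounded by $\beta\ell \int |\nabla g_\Delta|\,\mathd x = O(\beta\ell/\sqrt{\Delta})$ and absorbed by $\Delta \geq c_1\ell^2/\epsilon^2$; the truncation error equals the Gaussian tail mass beyond distance $(K-K')/2$ and is absorbed by the hypothesis $K-K' \geq c_2\sqrt{\Delta\log\epsilon^{-1}}$.

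Given the intensity bound, I would couple to $\Psi$ via rejection thinning: for each $x_0 \in \Pi_0 \cap Q_{K^d}$ with displaced endpoint $X_\Delta(x_0) := x_0 + B_\Delta(x_0)$, independently retain $X_\Delta(x_0)$ in $\Psi$ with probability $(1-\epsilon)\beta/\nu(X_\Delta(x_0))$ on the event $\{X_\Delta(x_0) \in Q_{K'}\}$. By the intensity lower bound this probability lies in $[0,1]$, and the retained set is by construction contained in $\Pi_\Delta$ with pointwise intensity $(1-\epsilon)\beta$ on $Q_{K'}$. To verify Poisson-like behaviour of the counts I would tessellate $Q_{K'}$ into cells of side $\sqrt{\Delta}$; in each cell the retained count is a sum of independent Bernoullis with expectation $(1-\epsilon)\beta\Delta^{d/2}$, and a Chernoff/Bernstein bound gives concentration with failure probability at most $\exp(-c\epsilon^2\beta\Delta^{d/2})$. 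A union bound over the at most $(K/\ell)^d$ cells yields the stated probability.

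The main obstacle is promoting this thinned process to a genuine Poisson point process, rather than merely a sum of independent Bernoullis with the right intensity. The standard fix is to refine the cell decomposition until each Bernoulli probability is $o(1)$ and then apply the Stein--Chen/Le Cam Poisson approximation, with the resulting total-variation error absorbed into the stated bound. A cleaner alternative, which I would prefer, is to sample $\Psi$ independently as a Poisson of intensity $(1-\epsilon)\beta$ on $Q_{K'}$ and, on the high-probability event that each cell of the side-$\sqrt{\Delta}$ tessellation contains at least $(1-\epsilon)\beta\Delta^{d/2}$ vertices of $\Pi_\Delta$, define a measurable cell-wise bijection from the $\Psi$-points to distinct $\Pi_\Delta$-points; relabelling $\Psi$'s locations to coincide with those of its paired partners realises the literal subset inclusion $\Psi \subseteq \Pi_\Delta$, and the Chernoff estimate of the previous paragraph underwrites the exponential failure bound.
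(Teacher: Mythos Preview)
The paper does not prove this proposition; it simply cites \cite{Peres2011} (and variants in \cite{Gracar2019a,Drewitz2023}) and moves on. Your intensity lower bound $\nu(y)\ge(1-\epsilon/2)\beta$ on $Q_{K'}$ is correct and is indeed the analytic core of that argument. You also rightly identify that promoting the position-thinned process to a \emph{genuine} Poisson point process is the real difficulty, but neither of your two fixes actually closes it. For the Stein--Chen route: the thinned process is a superposition of independent one-point processes indexed by the source particles $x_0$, and the total mass contributed by particle $x_0$ is $\int_{Q_{K'}}g_\Delta(y-x_0)\,(1-\epsilon)\beta/\nu(y)\,\mathd y$, which is of order one (not $o(1)$) for most $x_0$; hence the Le~Cam bound $\sum_{x_0}p_{x_0}^2$ is of the same order as the total intensity and gives no useful total-variation control. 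Refining target cells does not help, because the approximation error lives at the level of the whole point process, not of individual cell counts. For the relabelling route: once you move each point of your independently sampled $\Psi$ onto its matched $\Pi_\Delta$-partner, the resulting configuration is a measurable functional of $(\Psi,\Pi_\Delta)$ whose law depends on the (arbitrary, possibly highly nonuniform) positions of $\Pi_\Delta$ within each cell, and it is no longer marginally Poisson of intensity $(1-\epsilon)\beta$; so the coupling fails to have the required marginal.

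The ingredient your sketch is missing, and which is essentially how the argument in \cite{Peres2011} proceeds, is to Poissonize on the \emph{source} side rather than the target side. First discard surplus particles so that every source subcube holds exactly $\lceil\beta\ell^d\rceil$ points (this only shrinks $\Pi_\Delta$ and is harmless for the inclusion). Then for each subcube $C_i$ with centre $c_i$ draw $N_i\sim\mathrm{Poisson}\big((1-\tfrac{\epsilon}{3})\beta\ell^d\big)$ and, on the Chernoff-controlled event $\{N_i\le\lceil\beta\ell^d\rceil\ \forall i\}$, select $N_i$ of the particles from $C_i$. Because any two starting points in $C_i$ differ by at most $\sqrt d\,\ell\ll\sqrt\Delta$, their endpoint densities $g_\Delta(\cdot-x_j)$ agree on the relevant range up to a multiplicative factor $1+O(\ell\sqrt{\log\epsilon^{-1}}/\sqrt\Delta)=1+O(\epsilon)$; a further endpoint-dependent rejection with acceptance ratio $g_\Delta(y-c_i)/\big((1+O(\epsilon))g_\Delta(y-x_j)\big)\le 1$ therefore turns each packet into an honest Poisson process with intensity proportional to $g_\Delta(\cdot-c_i)$. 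The superposition over $i$ is then genuinely Poisson with intensity the Riemann sum you already bounded below, and a final deterministic thinning brings it to exactly $(1-\epsilon)\beta$ on $Q_{K'}$, realising the subset coupling with the stated failure probability.
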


\begin{proof}
	The proof of this result can be found in \cite{Peres2011}. Variants of this proposition for weighted lattices and fractal lattices can be found in \cite{Gracar2019a} and \cite{Drewitz2023}.
\end{proof}

We fix now for the remainder of the paper two positive constants, $\epst\ll\frac{1}{\log 2}$ and $\theta\in(\frac{\log 2}{\gamma+\gamma/\delta},\log 2)$. Note that by definition $\theta\epst\ll 1$. We will also from here on out observe the process $\mathcal{G}_t$ in discrete \emph{time steps} $t=0,1,\dots$ and prove results about the vertex and graph configurations at these discrete observation times. Despite this, the underlying process remains continuous in time, including the appearance/disappearance of edges and motion of particles, both of which can occur between our observation times.

\begin{mydef}\label{def:t-alpha-dense}
	Let $t>0$ and $I_k:=(\frac{1}{2}e^{-(k+1)\theta d},\frac{1}{2}e^{-k\theta d})$, $k\in\{0,\dots,\lfloor (\epst\log t)/d\rfloor\}$ and $I_{-1}:=(\frac{1}{2},1)$.
	We say a cube $Q\subset \R^d$ is \emph{$t$-$\alpha$-dense}, if for every $k\in\{-1,0,\dots,\lfloor (\epst\log t)/d\rfloor\}$, the locations of the vertices in $Q\times I_k$ contain as a subset an independent Poisson point process of intensity $(1-\alpha)\lambda|I_k|$ on $Q$, with marks restricted to $I_k$.
\end{mydef}
\begin{remark}
	Note in the above definition we do not require the marks to be uniformly distributed. In fact, it suffices for the marks to lie deterministically inside the corresponding intervals $I_k$. We will use this in the proof of the following proposition by disregarding the marks of the vertices in each individual layer and later simply treating them as if they are as small/large as their respective mark interval permits when applying various bounds that depend on the marks.
\end{remark}

\begin{prop}\label{prop:alpha_dense}
	Let $t>0$ be a sufficiently large integer and $\xi,\epsilon\in(0,1)$ two constants. Consider the cube $Q_{L^d}$, for $L=t$. Define for $i=0,\dots, t$ the events
	\[
		A_i=\{\text{at time }i\text{ the cube }Q_L\text{ is }t\text{-}\xi\text{-dense}\}.
	\]
	Then, for any vertex intensity $\lambda>0$, there exist two positive constants $c_1,c_2$ such that
	\[
		\P\left(\sum_{i=0}^{t-1}\1_{A_i}\geq(1-\epsilon)t\right)\geq 1-\exp\{-c_1t^{c_2}\}.
	\]
\end{prop}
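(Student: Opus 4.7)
The plan is to apply \Cref{prop:mixing} separately in each mark layer $I_k$, running from time $i-\Delta$ to time $i$ for a common $\Delta=\Delta(t)=o(t)$ to be chosen, and then to take a union bound over the integer observation times $i$. Fix a parameter $M=M(t)$ and, for each $k\in\{-1,0,\ldots,\lfloor\epst\log t/d\rfloor\}$, tessellate an enlarged cube $Q_{(L')^d}$ (with $L':=L+c_2\sqrt{\Delta\log\xi^{-1}}$) into subcubes of side $\ell_k:=(M/(\lambda|I_k|))^{1/d}$, so that each subcube contains on average $M$ layer-$k$ vertices. Marginally, $\X_{i-\Delta}$ is a Poisson point process, and its restriction to $I_k$ is a PPP of intensity $\lambda|I_k|$; a Chernoff bound therefore shows that every $\ell_k$-subcube holds at least $(1-\xi/3)M$ layer-$k$ vertices at time $i-\Delta$ with probability at least $1-(L'/\ell_k)^d\exp(-c\xi^2 M)$. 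On this event we apply \Cref{prop:mixing} with $\beta=(1-\xi/3)\lambda|I_k|$ and density parameter chosen of order $\xi$, producing a PPP of intensity $(1-\xi)\lambda|I_k|$ on $Q_L$ inside the layer-$k$ locations of $\X_i$, except on a further event of probability at most $(L'/\ell_k)^d\exp(-c'\xi^2\lambda|I_k|\Delta^{d/2})$.

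To handle every layer with the same $\Delta$ we require $\Delta\geq c_1\ell_{k_{\max}}^2/\xi^2$; since $\ell_{k_{\max}}\asymp M^{1/d}t^{\theta\epst/d}/\lambda^{1/d}$, this forces $\Delta\asymp M^{2/d}t^{2\theta\epst/d}/(\xi^2\lambda^{2/d})$. Choosing $M=Ct^a$ for a small constant $a>0$ with $2a/d+2\theta\epst/d<1$ -- possible since $\theta\epst\ll 1$ by the standing choice of $\theta$ and $\epst$ made in the paper -- makes $\Delta=o(t)$. With these choices both failure probabilities above are at most $t^d\exp(-c''\xi^2 t^a)$; summing over the $O(\log t)$ layers gives $\P(A_i^c)\leq\exp(-c''' t^a)$ uniformly in $i\geq\Delta$.

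A union bound then yields $\P(\exists i\in\{\Delta,\ldots,t-1\}\colon A_i^c)\leq t\exp(-c''' t^a)\leq\exp(-c'''' t^{a/2})$ for $t$ large. Since $\Delta=o(t)$, the first $\Delta$ observation times not addressed above contribute at most $\Delta<\epsilon t$ indicator mass deterministically for $t$ large, and so
\[
\P\left(\sum_{i=0}^{t-1}\1_{A_i}<(1-\epsilon)t\right)=\P\left(\sum_{i=0}^{t-1}\1_{A_i^c}>\epsilon t\right)\leq\P(\exists i\geq\Delta\colon A_i^c)\leq\exp(-c''''t^{a/2}),
\]
giving the claim with $c_1=c''''$ and $c_2=a/2$.

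The main obstacle is the triple scale constraint: the subcube side $\ell_k$ must be large enough that layer-$k$ vertex counts Poisson-concentrate; $\Delta$ must exceed $\ell_{k_{\max}}^2/\xi^2$ so that Brownian mixing in \Cref{prop:mixing} takes effect on the sparsest layer; yet $\Delta$ must be sublinear in $t$ so that the initial unaddressed transient does not dominate the count of bad times. The hypothesis $\theta\epst\ll1$ fixed just before \Cref{def:t-alpha-dense} is exactly what keeps $\ell_{k_{\max}}$ sub-polynomial enough in $t$ to render all three constraints compatible with a stretched-exponential tail bound.
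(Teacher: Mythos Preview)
Your argument is correct and takes a genuinely different route from the paper's proof. The paper runs the full spatio--temporal multi-scale machinery of Peres \emph{et al.}: it starts from an enlarged cube of side $L_1=t^2$ with subcube side $\ell_1=\Theta(\sqrt{t})$, establishes density at scale $1$ for \emph{all} time steps by a single Chernoff union bound, and then iterates through $\kappa=O(\log t)$ nested scales, at each step chopping the current time blocks into four pieces separated by mixing gaps $\Delta_j\asymp\ell_j^2\kappa^2$ and invoking \Cref{prop:mixing} to refresh the Poisson structure on the next smaller spatial scale. The fraction of discarded time and the cumulative density loss are both budgeted as $O(1/\kappa)$ per step, and the final scale $\kappa$ delivers the Poisson subset on $Q_{L^d}$.

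You exploit that the target event $A_i$ asks only for a dominated Poisson point process in each layer, not for density on arbitrarily fine subcubes, so a \emph{single} application of \Cref{prop:mixing} per layer suffices. By stationarity $\X_{i-\Delta}$ is marginally a PPP, so a Chernoff bound at the one carefully chosen subcube scale $\ell_k$ gives the hypothesis of \Cref{prop:mixing} directly, and a union bound over the $O(t)$ observation times and $O(\log t)$ layers finishes. In fact your bound is stronger than needed: you obtain that \emph{every} $i\geq\Delta$ is good with stretched-exponential probability, whereas the proposition only asks for a $(1-\epsilon)$ fraction; the first $\Delta=o(t)$ steps are simply absorbed into the $\epsilon t$ slack. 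The multi-scale route, by contrast, is the natural one when the final density condition lives on constant-size subcubes (as in the Boolean setting of \cite{Peres2011}), where a single Chernoff step cannot beat the union bound; here that obstruction is absent because the density condition is formulated as Poisson domination on the whole of $Q_L$.
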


\begin{proof}
	We will argue the proof across each of the $\lfloor (\epst \log t)/d\rfloor$ layers simultaneously, as we require the cube $Q_{L^d}$ to be $t$-$\xi$-dense for the same $(1-\epsilon)t$ time steps. We start with the ``thinnest'' layer $k^*:=\lfloor (\epst \log t)/d\rfloor$, i.e. the one in which the Poisson point process we are working with has the lowest intensity. The vertices with marks in this layer form a Poisson point process of intensity 
	\begin{align*}
		\lambda|I_{k^*}|&=\lambda\left(\frac{1}{2}e^{-(\epst \log t)\theta}-\frac{1}{2}e^{-((\epst \log t)/d+1)\theta d}\right)\\
		&>\frac{1}{4} \lambda t^{-\epst\theta}.
	\end{align*}
	We will from here on out disregard the actual marks of the vertices and simply use that they are all bounded to be in the interval $I_{k^*}$, which suffices to satisfy the condition of \Cref{def:t-alpha-dense}. 
	For the remaining layers $k\in\{0,\dots,k^*-1\}$ the intensity of the corresponding Poisson point processes with marks in the interval $I_k$ is strictly greater than that of layer $k^*$. Consequently, the bounds we will show for $k^*$ hold, \emph{ceteris paribus}, also for the other layers. 
	We will disregard the actual marks of these vertices as well, and only use that the marks belong to the appropriate mark interval $I_k$.

Let $\kappa=O(\log t)$ be the number of scales we will use in the multi-scale argument that follows. Next, set $L^2=L_1>L_2>\cdots>L_\kappa=L$. We will tessellate the cubes of side-length $L_i$ into subcubes of side length $\ell_i$, to which end we let $\ell_1>\ell_2>\cdots>\ell_\kappa$, with their precise values to be determined later (see \Cref{fig:spatialtessellation}). At this stage we only assume that $\lim_{t\to\infty}\ell_{i}=\infty$ for all $i\in\{1,\dots,\kappa\}$. 
	
	\begin{figure}[!h]
	\begin{center}
\begin{tikzpicture}
	\tikzstyle{every node}=[font=\small]
	\draw (0,0) rectangle (5,5);
	\draw[<->,dashed] (0,-1) -- (2.5,-1) node[above] {$L_{j-1}$} -- (5,-1);
	
	\draw (0.5,0.5) rectangle (4.5,4.5);
	\draw[<->,dashed] (0.5,-1.5) -- (2.5,-1.5) node[below] {$L_j$} -- (4.5,-1.5);
	
	\draw (6,0) rectangle (11,5);
	\draw[dashed] (6,1) -- (11,1);
	\draw[dashed] (6,2) -- (11,2);
	\draw[dashed] (6,3) -- (11,3);
	\draw[dashed] (6,4) -- (11,4);
	
	\draw[dashed] (7,0) -- (7,5);
	\draw[dashed] (8,0) -- (8,5);
	\draw[dashed] (9,0) -- (9,5);
	\draw[dashed] (10,0) -- (10,5);
	
	\draw[<->,dashed] (7,-1) -- (7.5,-1) node[above] {$\ell_{j-1}$} -- (8,-1);
	
	\draw[red,thick] (6.5,0.5) rectangle (10.5,4.5);
	\draw[dashed,red,thick] (6.5,0.5+4/5) -- (10.5,0.5+4/5);
	\draw[dashed,red,thick] (6.5,0.5+8/5) -- (10.5,0.5+8/5);
	\draw[dashed,red,thick] (6.5,0.5+12/5) -- (10.5,0.5+12/5);
	\draw[dashed,red,thick] (6.5,0.5+16/5) -- (10.5,0.5+16/5);
	
	\draw[dashed,red,thick] (6.5+4/5,0.5) -- (6.5+4/5,4.5);
	\draw[dashed,red,thick] (6.5+8/5,0.5) -- (6.5+8/5,4.5);
	\draw[dashed,red,thick] (6.5+12/5,0.5) -- (6.5+12/5,4.5);
	\draw[dashed,red,thick] (6.5+16/5,0.5) -- (6.5+16/5,4.5);
	
	\draw[<->,dashed,thick,red] (6.5+12/5,-1) -- (6.5+28/10,-1) node[above] {$\ell_{j}$} -- (6.5+16/5,-1);

\end{tikzpicture}
\caption{The spatial multi-scale recursion. Note that the difference $L_{j-1}-L_j$ is proportional to the value $\ell_{j-1}$ by \eqref{eq:elljmj} and \eqref{eq:Lj}, allowing us to apply \Cref{prop:mixing}.}\label{fig:spatialtessellation}
\end{center}
\end{figure}
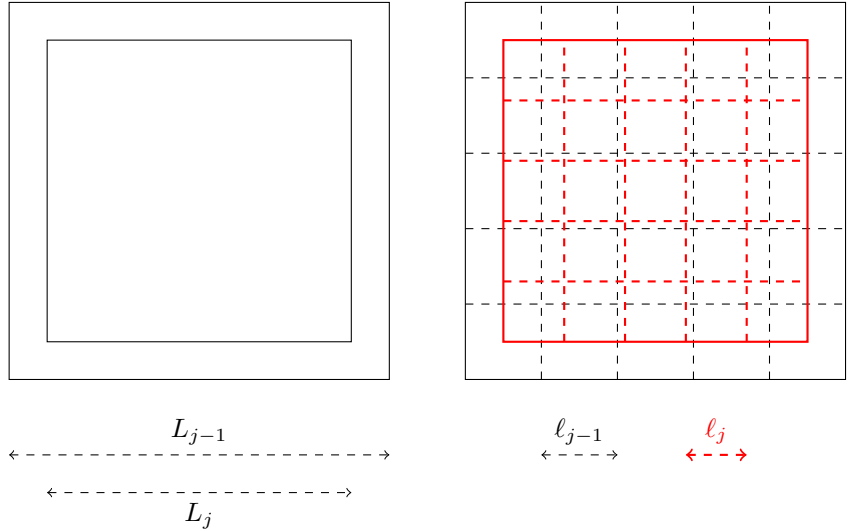
	
	We say that a subcube is \emph{good} at a given time step for scale $j$, if it contains at least 
	\[
		(1-\xi_j)\lambda|I_k|
	\]
	vertices at the time in layer $k$, for $k\in\{0,\dots,k^*\}$, with $\xi_j$ chosen to satisfy
	\[
		\frac{\xi}{2}=\xi_1<\xi_2<\cdots<\xi_{\kappa}=\xi,
	\]
	and
	\[
		\xi_j-\xi_{j-1}=\frac{\xi}{2(\kappa-1)},\forall j.
	\]

	Let $D_1$ be the event that all subcubes of side-length $\ell_1$ inside the cube of side length $L_1$ are good for all time steps in $[0,t]$. 
	For a fixed subcube and a fixed time step, the number of vertices in a given layer $k$ is given by a Poisson random variable with mean 
	\(
		\lambda|I_k|\ell_1^d.
	\)
	Using \Cref{lemma:pchernoff}, we obtain that there are more than 
	\(
		(1-\xi_i)\lambda|I_k|\ell_1^d
	\)
	vertices in this subcube in layer $k$ at the given time with probability larger than $1-\exp\{-\xi_1^2 \lambda |I_k|\ell_1^d/2\}$. 
	The number of subcubes of side-length $\ell_1$ inside $Q_{L_1^d}$ is $O(t^{2d})$ by the choice of $L_1$ and our assumption on $\ell_1$. Before taking a union bound across all of the layers, note that 
	\begin{equation}
		\lambda|I_k|\geq \lambda|I_{k^*}|>\tfrac{1}{4}\lambda t^{-\epst\theta},\quad\forall k\in\{0,1,\dots,k^*-1\}.\label{eq:kstar}
	\end{equation}
	Therefore taking the union bound across all the subcubes, all the time steps, and across all the $k^*$-many layers by using \eqref{eq:kstar}, we obtain that there exist two positive constants $c_1$ and $c_2$ such that
	\[
		\P(D_1)\geq 1-\lfloor (\epst \log t)/d\rfloor t^{2d+1 }\exp{\{-\xi_1^2 \lambda t^{-\epst\theta}\ell_1^d/8\}}\geq 1-\exp\{-c_1t^{c_2}\},
	\]
	where the last inequality holds for large enough $t$ if $\ell_1>C t^{\epsa}$. Here, $C$ is a large enough constant and $\epsa$ is a constant satisfying $\epst\theta<\epsa d<1$. Note that this assumption is in line with our previous assumption on the values $\ell_i$ increasing with $t$. We will later set $\ell_1=\Theta(\sqrt{t})$, which is in agreement with the above.
	
	We now proceed to smaller scales of the multi-scale argument. Let $s_j$ be the number of time steps considered for scale $j$. We start with $s_1=t$ so that at scale $1$, all time steps are considered. We will group the time steps into blocks of $m_j$ consecutive steps for each scale $j$, starting with $m_1=t$, i.e. we consider all the $t$ many steps at scale $1$ as a single block.
	
	At scale $j-1$, we will subdivide each interval $[b,b+m_{j-1})$ into four subintervals of length $m_j$ separated by $\Delta_{j-1}$ time steps (which we will determine below), of the form
	\[
		[b+k\Delta_{j-1}+(k-1)m_j,b+k\Delta_{j-1}+km_j),\,k\in\{1,2,3,4\},
	\]
	where $m_j=\frac{m_{j-1}-4\Delta_{j-1}}{4}$. Put into words, when going from scale $j-1$ to $j$, we split each existing interval into four subintervals of equal length, each subinterval preceded by a ``gap'' of $\Delta_{j-1}$ many steps (see \Cref{fig:timeintervals}). We will use these gaps to apply \Cref{prop:mixing}. Note that the above construction gives that $s_j=s_{j-1}(1-\frac{4\Delta_{j-1}}{m_{j-1}})$.
	Similarly to the spatial definition of good subcubes, we say that a time interval of scale $j$ is \emph{good}, if all the subcubes are good for scale $j$ during all of the time steps contained in the time interval.
	
	\begin{figure}[!h]
	\begin{center}
	\begin{tikzpicture}
		\tikzstyle{every node}=[font=\small]
		\draw  (0,0) rectangle  node {\small scale $j-1$} (10,0.5);
		\draw [<->,dashed] (0,0.75) -- (5,0.75) node[above] {$m_{j-1}$} -- (10,0.75);
		
		\draw [<->,dashed] (0,-1.95) -- (0.25,-1.95) node[below] {$\Delta_{j-1}$} -- (0.5,-1.95);
		\draw  (0.5,-2) rectangle  node {\small scale $j$} (2.5,-1.5);
		\draw [<->,dashed] (0.5,-1.25) -- (1.5,-1.25) node[above] {$m_j$} -- (2.5,-1.25);
		
		\draw [<->,dashed] (2.5,-1.95) -- (2.75,-1.95) node[below] {$\Delta_{j-1}$} -- (3,-1.95);
		\draw  (3,-2) rectangle  node {\small scale $j$} (5,-1.5);
		\draw [<->,dashed] (3,-1.25) -- (4,-1.25) node[above] {$m_j$} -- (5,-1.25);
		
		\draw [<->,dashed] (5,-1.95) -- (5.25,-1.95) node[below] {$\Delta_{j-1}$} -- (5.5,-1.95);
		\draw  (5.5,-2) rectangle  node {\small scale $j$} (7.5,-1.5);
		\draw [<->,dashed] (5.5,-1.25) -- (6.5,-1.25) node[above] {$m_j$} -- (7.5,-1.25);
		
		\draw [<->,dashed] (7.5,-1.95) -- (7.75,-1.95) node[below] {$\Delta_{j-1}$} -- (8,-1.95);
		\draw  (8,-2) rectangle  node {\small scale $j$} (10,-1.5);
		\draw [<->,dashed] (8,-1.25) -- (9,-1.25) node[above] {$m_j$} -- (10,-1.25);
	\end{tikzpicture}
	\caption{The temporal multi-scale recursion.}\label{fig:timeintervals}
	\end{center}
\end{figure}
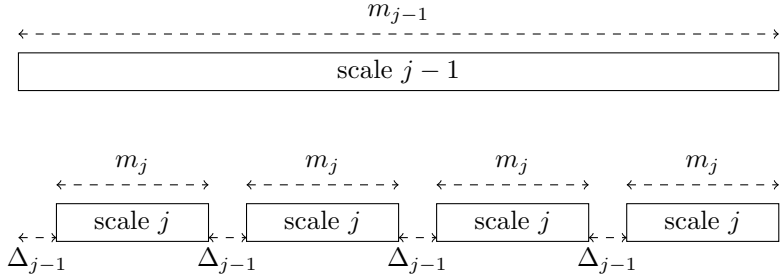
	
	Let now $0=\epsilon_1<\epsilon_2<\cdots<\epsilon_{\kappa}=\epsilon$ satisfy $\epsilon_j-\epsilon_{j-1}=\frac{\epsilon}{\kappa-1}$. We define for each scale $j\in \{2,\dots,\kappa-1\}$ the event $D_j$ to be the event that a fraction of at least $(1-\frac{\epsilon_j}{2})$ time intervals of scale $j$ are good. 
	We define $D_\kappa$ to be the event that for a fraction of at least $(1-\frac{\epsilon_\kappa}{2})$ time intervals of scale $\kappa$, the locations of the vertices in $Q_{L_\kappa^d}$ with marks in the layer $k$ contain as a subset an independent Poisson point process of intensity 
	$(1-\xi_\kappa)\lambda|I_k|$,
	for all $k\in\{0,\dots,k^*\}$ simultaneously.
	
	In order to obtain the $(1-\epsilon)t$ many time steps from the claim of the proposition, we note that if $D_\kappa$ holds, for at least
	\begin{align*}
		(1-\frac{\epsilon_\kappa}{2})s_\kappa&=(1-\frac{\epsilon_\kappa}{2})s_1\prod_{j=1}^{\kappa-1}(1-\tfrac{4\Delta_j}{m_j})\\
		&\geq(1-\tfrac{\epsilon}{2})t\big(1-\sum_{j=1}^{\kappa-1}\tfrac{4\Delta_{j}}{m_j}\big)
	\end{align*}
	of the time steps the locations of the vertices in $Q_{L_\kappa^d}=Q_{L^d}$ with marks in the layer $k$ contain as a subset an independent Poisson point process of intensity $(1-\xi_\kappa)\lambda|I_k|$, for all $k\in\{0,\dots,k^*\}$ simultaneously. Setting $\frac{\Delta_j}{m_j}=\frac{\epsilon}{8\kappa}$ will therefore give the desired bound.
	
	We next proceed to set the value $\Delta_j$ to be sufficiently large to allow vertices to move over a distance $\ell_j$. To that end, we set $\ell_j$ via
	\begin{equation}\label{eq:deltadef}
		\Delta_j=C'\ell_j^2\kappa^2,
	\end{equation}
	with $C'$ some large enough constant. Combined with the above, this gives
	\begin{equation}\label{eq:elljmj}
		\frac{\ell_j^2}{m_j}=\frac{\epsilon}{8C'\kappa^3}
	\end{equation}
	and
	\[
		\ell_{j+1}=\ell_j\sqrt{\frac{1}{4}-\frac{\epsilon}{8\kappa}}.
	\]
	Since $m_1=t$, we get $\ell_1^2=\frac{\epsilon}{8C'\kappa^3}t$, 
	and since $\kappa=O(\log t)$, we also have $\ell_{\kappa}^d\geq C\log^3 t$ for any arbitrary constant $C$, if $t$ is large enough. Consequently, all of our preceding assumptions on the sequence $\ell_1,\dots,\ell_\kappa$ hold, in particular the assumption on $\ell_1$.
	
	We now argue that if at time $b':=b-\Delta_{j-1}$ all subcubes were good for scale $j-1$, and if $\ell_{\kappa}^d\geq C\log^3 t$ for $C$ large enough, then the interval $[b,b+m_j)$ of scale $j$ is good with probability larger than $1-\exp\{-c\lambda t^{-\epst\theta}\ell_j^d/\kappa^2\}$ for some positive constant $c$, uniformly across all events that are measurable with respect to the $\sigma$-algebra induced by particle behaviour up to and including time $b'$.
	
	To that end, let 
	\[
		E=\{\text{at time $b'$ all subcubes are good for the scale $j-1$}\}
	\]
	and let $F$ be any event measurable with respect to the above $\sigma$-algebra. If $E\cap F=\emptyset$, then 
	\[
		\P([b,b+m_j)\text{ is not good},E|F)=0,
	\]
	and the claimed bound holds. Therefore, let $E\cap F\neq\emptyset$ and let $\pi_{b'}^k$ be the point process of vertices in layer $k$, obtained at time $b'$ after conditioning on $E\cap F$. Fix now some time step $b^*\in[b,b+m_j)$. We can then bound
	\[
		\P(\text{at time $b^*$ not all subcubes are good for scale $j$}\,|\,E,F)
	\]
	as follows. We have by $E$ that all subcubes are good for the scale $j-1$ at time $b'$. We now set a constant $c_{\epsilon}$ such that $(1-c_{\epsilon})^2(1-\xi_{j-1})=1-\xi_j$, giving that $c_{\epsilon}=\theta(\xi_j-\xi_{j-1})$. We also choose a constant $c'$ and set $C'$ from \eqref{eq:deltadef} so that
	\begin{equation}\label{eq:Lj}
		L_j\leq L_{j-1}-c'\sqrt{\Delta_{j-1}\log\frac{1}{c_{\epsilon}}},
	\end{equation}
	which allows us to apply \Cref{prop:mixing} with $K=L_{j-1}$ and $K'=L_j$ across each of the $k^*+1$ many layers.
	By using \eqref{eq:kstar} a second time, we therefore obtain for all layers $k\in\{0,\dots,k^*\}$ a fresh Poisson point process $\Xi^k$ with intensity 
	$(1-c_{\epsilon})(1-\xi_{j-1})\lambda|I_k|$ 
	which can be coupled with $\pi^k_{b^*}$ in such a way that $\Xi^k$ is stochastically dominated by $\pi^k_{b'}$ for all $k\in\{0,\dots,k^*\}$ (i.e. the process after the vertices of $\pi^k_{b^*}$ have moved for time $b^*-b'$) inside $Q_{L_j}$ with probability at least
	\[
		1-\lfloor (\epst \log t)/d\rfloor\exp\big\{-c_1c_{\epsilon}^2\lambda(1-\xi_{j-1})t^{-\epst\theta}\ell_{j}^d\big\}
	\]
	for some constant $c_1>0$. Observe that the choice $L_1=t^2$ and $\kappa=O(\log t)$, together with \eqref{eq:deltadef} gives that it is always possible to choose $L_j$ satisfying \eqref{eq:Lj} and $L_\kappa=t$, provided $t$ is large enough. 
	
	Recall that $D_\kappa$ is defined differently than the remaining events $D_j$, $j<\kappa$ and we already have the desired bound for $D_\kappa$ from the above, so the following is only necessary for $j<\kappa$. A given subcube is good for scale $j$ at time $b^*$ if for all $k\in\{0,\dots,k^*\}$, $\Xi^k$ contains at least 
	$(1-\xi_j)\lambda |I_k|\ell_j^d$  
	vertices in that subcube, which by our choice of $c_{\epsilon}$ and by applying \Cref{lemma:pchernoff}, occurs with probability at least 
	\[
		1-\lfloor (\epst \log t)/d\rfloor\exp\{-c_2c_{\epsilon}^2(1-c_{\epsilon})(1-\xi_{j-1}) \lambda t^{-\epst\theta}\ell_j^d\},
	\]
	where we have once more used \eqref{eq:kstar} and a union bound over the $k^*$-many layers.
	Using that $\ell_{\kappa}^d\geq C\log^3 t$ with $t$ large and taking a union bound across the time steps in $[b,b+m_j)$ and all subcubes of scale $j$, we obtain that there exists a positive constant $c$ such that
	\begin{equation}\label{eq:goodintervalbound}
		\P([b,b+m_j)\text{ is good, } E\,|\,F)\geq 1-\exp\{-c\lambda t^{-\epst\theta}\ell_j^d/\kappa^2\},
	\end{equation}
	with this bound trivially holding also for scale $\kappa$ by the previous comment about $D_\kappa$.
	
	Using \eqref{eq:goodintervalbound}, we now give an upper bound on the probability of the event $D_j^c\cap D_{j-1}$ for $j\geq 2$.
	If $D_{j-1}$ occurs, then by its definition there are at least $(1-\frac{\epsilon_{j-1}}{2})\frac{s_{j-1}}{m_{j-1}}$ intervals that are good for scale $j-1$. Going to scale $j$, these intervals subdivide into $4(1-\frac{\epsilon_{j-1}}{2})\frac{s_{j-1}}{m_{j-1}}$ subintervals of scale $j$ for us to consider. If the event $D_{j}^c$ is to hold, there should be fewer than $(1-\frac{\epsilon_{j}}{2})\frac{s_j}{m_j}$ intervals that are good for scale $j$. Let $w$ be the difference between these two values, i.e.
	\[
		w=\frac{s_j}{m_j}\left(\frac{\epsilon_j-\epsilon_{j-1}}{2}\right).
	\]
	Let $Z$ be the number of subintervals of $[b,b+m_j)$ of scale $j$ that are not good for scale $j$, but are such that at the time step $b-\Delta_{j-1}$ all subcubes were good for scale $j-1$. We obtain that if $D_{j-1}$ and $D_j^c$ hold simultaneously, $Z$ has to be at least $w$.
	
	$Z$ can be written as a sum of $s_j/m_j$ indicator random variables $\chi_r$, each one representing a time interval of scale $j$. Note that although the $\chi_r$ are not independent of each other, \eqref{eq:goodintervalbound} gives that the probability that $\chi_r=1$, given any realisation of the previous $r-1$ intervals, is smaller than $\rho_j:=\exp\{-c\xi^2t^{-\epst\theta}\lambda \ell_j^d/\kappa^2\}$ for some constant $c$. Consequently, $Z$ is stochastically dominated by a random variable $Z'$, distributed as a Binomial random variable with parameters $s_j/m_j$ and $\rho_j$. Using \Cref{lemma:bchernoff} leads to
	\begin{align*}
		\P(Z'\geq w)&=\P\big(Z'-\E[Z']\geq \tfrac{s_j}{m_j}(\tfrac{\epsilon_j-\epsilon_{j-1}}{2}-\rho_j)\big)\\
		&\leq \exp\big\{-\tfrac{s_j}{m_j}(\tfrac{\epsilon_j-\epsilon_{j-1}}{2})(\log(\tfrac{\epsilon_j-\epsilon_{j-1}}{2\rho_j})-1)\big\}.	
	\end{align*}
	Recall that $\epsilon_j-\epsilon_{j-1}=\frac{\epsilon}{\kappa-1}$ and $-\log(\rho_j)=\theta(\xi^2\lambda t^{-\epst\theta}\ell_j^d/\kappa^2)$. Furthermore recall that $\ell_j^d\geq \ell_\kappa^d\geq C\log^3 t$ and $\kappa=O(\log t)$, which together gives that for $t$ large enough, we can find a positive constant $c$ for which
	\[
		\P(Z'\geq w)\leq\exp\big\{-c\lambda\xi^2 s_j\tfrac{t^{\epst\theta}\ell_j^d}{m_j}\tfrac{\epsilon}{\kappa^3}\big\}.
	\]
	Recall next that by \eqref{eq:elljmj} we have $\frac{\ell_j^2}{m_j}=\frac{\epsilon}{8C'\kappa^3}$ and note that by its definition $s_{j-1}=\theta(t)$ for all $j$. We therefore obtain
	\[
		\P(Z'\geq w)\leq\exp\big\{-c\lambda\tfrac{\epsilon^2\xi^2}{\kappa^6}t^{-\epst\theta}\ell_j^{d-2}t\big\}
	\]
	for some (new) constant $c$.
	Together, this yields that 
	\[
		\P(D_j^c\cap D_{j-1})\leq \exp\big\{-\tfrac{ct^{c_3}}{\kappa^6}\big\},
	\]
	where we used that $\ell_j\geq \ell_\kappa$ and $\ell_\kappa^d\geq C\log^3 t$, and $c,c_3>0$ are some positive constants.
	
	We are now ready to conclude the proof. To that end, we first want to bound the probability $\P(D_{\kappa}^c)$. We have that 
	\[
		\P(D_\kappa^c)\leq \P(D_\kappa^c\cap D_{\kappa-1})+\P(D_{\kappa-1}^c)
	\]
	and applying this recursively, it follows that
	\[
		\P(D_\kappa^c)\leq\sum_{j=2}^\kappa \P(D_j^c\cap D_{j-1})+\P(D_1^c).
	\]
	Each term in this sum can be bounded from above by $\exp\{-\frac{c}{\kappa^6}t^{c_3}\}$ and the last term by $\exp\{-c_1 t^{c_2}\}$, which gives the stated claim, using that $\kappa=O(\log t)$. 
\end{proof}

\subsection{Evenly spread subgraphs}\label{subsec:evenlySpread}

\begin{mydef}\label{def:large_component}
	We call a finite connected subgraph of $\cG$ contained inside $Q_K$ an \emph{evenly spread subgraph of $\cG$ inside $Q_K$}, if it contains at least $b\cdot K$ vertices for some constant $b>0$ and if every subcube of $Q_K$ of the form \(\times_{i=1}^d(2^{k_p} v_i,(2^{k_p}+1)v_i)\), \(v_i\in\Z\), $k_p=\lfloor \epst\log K/d\rfloor$ contains a vertex with mark smaller than \(\frac{1}{2}e^{-k_p\theta d}\) belonging to the evenly spread component. We call these vertices the \emph{bottom vertices} of the evenly spread component.
\end{mydef}

\begin{prop}\label{prop:large_component}
	Fix the vertex intensity $\lambda>0$, fix $K$ large enough and consider the cube $Q_K\subset \R^d$. Assume that $Q_K$ is $K$-$\alpha$-dense for some $\alpha>0$ and let $\gamma>\frac{\delta}{\delta+1}$. Then, there exists $\epsilon>0$ such that for $K$ large enough, there exists an evenly spread subgraph of $\cG$ inside $Q_K$ with probability at least
	\[
		1-\exp\{-K^\epsilon\}.
	\]
\end{prop}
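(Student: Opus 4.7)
My approach is to construct the evenly spread subgraph as a hierarchical backbone with a ``bottom vertex'' in each fine subcube, glued together via representative vertices in every mark layer $I_k$, $k\in\{-1,0,\dots,k_p\}$. The key structural fact is that the ultrasmall condition $\gamma>\delta/(\delta+1)$ is equivalent to the non-emptiness of the interval $(\log 2/(\gamma+\gamma/\delta),\log 2)$, which is the interval $\theta$ was chosen from and is exactly what makes all subsequent exponents work. I first tessellate $Q_K$ into subcubes of side $2^{k_p}\asymp K^{\epst\log 2/d}$. By the $K$-$\alpha$-density assumption, layer-$k_p$ vertices in $Q_K$ dominate an independent Poisson point process of intensity $(1-\alpha)\lambda|I_{k_p}|$, so the expected number of layer-$k_p$ vertices per subcube is $\asymp \lambda K^{\epst(\log 2-\theta)}$, which diverges polynomially because $\theta<\log 2$. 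A Poisson tail bound (of the same type used in the proof of \Cref{prop:alpha_dense}) plus a union bound over the at most $K$ subcubes then guarantees a bottom vertex in every subcube with probability at least $1-\exp\{-K^{c_1}\}$ for some $c_1>0$.

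Next I would build the hierarchical backbone by iterating through the layers from $k=k_p$ downwards to $k=-1$. The inductive step asserts that every ``backbone'' vertex at layer $k$ has a direct neighbor which is a backbone vertex at layer $k-1$. For a fixed layer-$k$ vertex $\x$ with mark $\asymp e^{-k\theta d}$, I pick a spatial scale $\ell_k$ so that the lower bound of \Cref{ass:main} gives a uniformly positive connection probability to each layer-$(k-1)$ vertex in a surrounding box of side $\ell_k$; the number of such candidates then has expectation $\asymp \lambda|I_{k-1}|\ell_k^d$. The condition $\theta>\log 2/(\gamma+\gamma/\delta)$ is precisely what forces this expectation to grow polynomially in $K$ uniformly in $k\leq k_p$, so each backbone vertex at level $k$ has a neighbor at level $k-1$ except on an event of stretched-exponentially small probability. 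Composing these estimates across the $O(\log K)$ layers and the $\mathrm{poly}(K)$ relevant vertices preserves a bound of the form $1-\exp\{-K^{c_2}\}$.

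At the coarsest layer $I_{-1}$, vertices form a Poisson point process of intensity $(1-\alpha)\lambda/2$ on $Q_K$, and their pairwise connection probability is bounded below by a function depending only on euclidean distance, making the induced graph a long-range random connection model of constant-intensity; a standard renormalization argument across geometric spatial scales then produces a single component spanning $Q_K$ with stretched-exponential probability. Together with the previous two steps, every bottom vertex ends up in a common connected component that also contains the $\Theta(\lambda K)$ top-layer vertices, yielding the required size $\geq bK$. I expect the main obstacle to lie in the middle step: tuning the $\ell_k$ and controlling the per-layer Chernoff estimates so that the product over all $O(\log K)$ layers and over all active vertices telescopes to a polynomial-in-$K$ exponent. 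The sharpness of the exponent inequality $\theta(\gamma+\gamma/\delta)>\log 2$ means that the construction has no slack at the deepest layer $k_p$, and any mis-tuning of $\ell_k$ risks turning the stretched-exponential failure probability into a merely polynomial one.
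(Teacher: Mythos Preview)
Your middle step does not work as stated. A layer-$k$ vertex $\x$ has mark $u\asymp e^{-k\theta d}$, so by \Cref{ass:main} its connection probability to a layer-$(k-1)$ vertex at distance $r$ is at most of order $1\wedge u^{-\delta\gamma}r^{-d\delta}$. Integrating this against the layer-$(k-1)$ intensity $(1-\alpha)\lambda|I_{k-1}|\asymp e^{-k\theta d}$ over all of $\R^d$ gives an expected number of direct layer-$(k-1)$ neighbours of order
\[
|I_{k-1}|\cdot u^{-\gamma}\asymp e^{-k\theta d}\cdot e^{k\theta\gamma d}=e^{-k\theta d(1-\gamma)}\longrightarrow 0,
\]
since $\gamma<1$. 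No choice of $\ell_k$ can rescue this: the tail of the profile is integrable because $\delta>1$, so the expectation is governed by the ball of volume $u^{-\gamma}$. The inequality $\theta(\gamma+\gamma/\delta)>\log 2$ you invoke does \emph{not} control this quantity; it is the right condition for a \emph{two-step} connection via a connector with mark in $(\tfrac12,1)$. That is what the paper does (\Cref{lemma:twoconn}): the number of high-mark connectors that simultaneously link $\x_{k+1,\v}$ and $\x_{k,2\v+\mathbf e}$ is Poisson with parameter of order $e^{kd\epsilon_3}$ precisely under that hypothesis. The connectors are essential, and they must be colour-split so that each layer gets its own disjoint reservoir.

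Your final step also fails for small $\lambda$. Restricted to $I_{-1}=(\tfrac12,1)$, the minimum mark is bounded below by $\tfrac12$, so the induced graph is a homogeneous random connection model with profile $\asymp 1\wedge r^{-d\delta}$ and intensity $(1-\alpha)\lambda/2$. This model has a strictly positive critical intensity, so for small $\lambda$ there is no spanning component inside $Q_K$ and your renormalisation cannot produce one. The paper avoids this entirely: it glues the component together at the \emph{bottom} layer $k_p$ (the most powerful vertices), where adjacent boxes are linked, again via connectors, with failure probability $\exp\{-c\lambda e^{k_pd\epsilon_4}\}$; this is stretched-exponentially small in $K$ for \emph{every} $\lambda>0$. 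The linear size then comes from a Galton--Watson-type count of good boxes descending from $k_p$ to a fixed $k_0$, not from the layer $I_{-1}$.
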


\begin{proof}
	Recall the values $\epst\in(0,\frac{1}{\log 2})$ and $\theta\in(\frac{\log 2}{\gamma+\gamma/\delta},\log 2)$. Set $\epsilon_1>0$ small enough that $\theta>\frac{\epsilon_1+\log 2}{\gamma+\gamma/\delta}$.
		Let $n_p=\lfloor K^{(1-\epst \log 2)/d}\rfloor$ and $k_p=\lfloor \epst\log K/d\rfloor$. For $k=0,\dots,k_p$ define
	\[
		V_k:=\{0,\dots,n_p 2^{k_p-k}-1\}^d
	\]
	and
	\[
		A_{k,\mathbf{v}}:=\bigtimes_{i=1}^d\big(2^k v_i,2^k(v_i+1)\big)\;\text{for }k=0,\dots k_p\text{ and }\mathbf{v}=(v_1,\dots,v_d)\in V_k.
	\]
	
	For each $k=0,\dots,k_p$, the cubes $\{A_{k,\mathbf{v}}:\mathbf{v}\in V_k\}$ give a tessellation of $Q_K$ into $(n_p 2^{k_p-k})^d$ cubes of volume $2^{kd}$, with the finest tessellation at $k=0$ and the coarsest at $k=k_p$. Note that the cubes across layers are nested inside each other in a tree-like structure with every $A_{k+1,\v}$ tessellating into the 4 cubes $A_{k,2\v+\mathbf{e}}:\mathbf{e}\in\{0,1\}^d$.
	
	Define
	\[
		B_{k,\v}:=A_{k,\v}\times\big(\tfrac{1}{2}e^{-(k+1)\theta d},\tfrac{1}{2}e^{-k\theta d}\big)\;\text{for }k=0,\dots k_p\text{ and }\v=(v_1,\dots,v_d)\in V_k.
	\]
	We use the parameter $k=0,\dots,k_p$ to signify the layer of the boxes $\{B_{k,\v}:\v\in V_k\}$, which defines the range of the marks of points of $\X$ inside the boxes and the level of the coarseness of the tessellation of space. Note that the ranges of the marks correspond with the ones given by \Cref{def:t-alpha-dense}. Note further that at the ``lowest layer'', i.e. for $k=k_p$, the boxes have width of order $K^{\epst\log 2}$ and the corresponding mark range is of order $K^{-\epst\theta \log 2}$. Due to the nested structure of the boxes $A_{k,\v}$ and disjoint nature of the intervals $\big(\tfrac{1}{2}e^{-(k+1)\theta d},\tfrac{1}{2}e^{-k\theta d}\big)$, the system of boxes $\{B_{k,\v}:k=0,\dots,k_p,\v\in V_k\}$ has the structure of a $2^d$-regular tree when treating $B_{k+1,\v}$ as the common parent of the $2^d$ boxes $B_{k,2\v+\mathbf{e}}$ with $\mathbf{e}\in\{0,1\}^d$; see \Cref{fig:box_nested} for an example.
	
	\begin{figure}[h]
\begin{center}
\begin{tikzpicture}[scale=0.4, every node/.style={scale=0.6}]
\draw (0,4.5)  -- (0,4.5) node[left] {$\frac{1}{2}$} --  (0, -3) node[below left] {$0$};
\draw (0,-3)  -- (24.5,-3);
\node at (4,-1) {$\vdots$}; 
\node at (12,-1) {$\vdots$}; 
\node at (20,-1) {$\vdots$}; 
\draw (0,-0.5) rectangle ++(8,1); \node at (4,-0.1) {$B_{3,1}$};
\draw (8,-0.5) rectangle ++(8,1); \node at (12,-0.1) {$B_{3,2}$};
\draw (16,-0.5) rectangle ++(8,1); \node at (20,-0.1) {$B_{3,3}$};
\draw (0,0.5) rectangle ++(4,1.5); \node at (2,1) {$B_{2,1}$};
\draw (4,0.5) rectangle ++(4,1.5); \node at (6,1) {$B_{2,2}$};
\draw (8,0.5) rectangle ++(4,1.5); \node at (10,1) {$B_{2,3}$};
\draw (12,0.5) rectangle ++(4,1.5); \node at (14,1) {$B_{2,4}$};
\draw (16,0.5) rectangle ++(4,1.5); \node at (18,1) {$B_{2,5}$};
\draw (20,0.5) rectangle ++(4,1.5); \node at (22,1) {$B_{2,6}$};
\draw (0,2) rectangle ++(2,2.5); \node at (1,3) {$B_{1,1}$};
\draw (2,2) rectangle ++(2,2.5); \node at (3,3) {$B_{1,2}$};
\draw (4,2) rectangle ++(2,2.5); \node at (5,3) {$B_{1,3}$};
\draw (6,2) rectangle ++(2,2.5); \node at (7,3) {$B_{1,4}$};
\draw (8,2) rectangle ++(2,2.5); \node at (9,3) {$B_{1,5}$};
\draw (10,2) rectangle ++(2,2.5); \node at (11,3) {$B_{1,6}$};
\draw (12,2) rectangle ++(2,2.5); \node at (13,3) {$B_{1,7}$};
\draw (14,2) rectangle ++(2,2.5); \node at (15,3) {$B_{1,8}$};
\draw (16,2) rectangle ++(2,2.5); \node at (17,3) {$B_{1,9}$};
\draw (18,2) rectangle ++(2,2.5); \node at (19,3) {$B_{1,10}$};
\draw (20,2) rectangle ++(2,2.5); \node at (21,3) {$B_{1,11}$};
\draw (22,2) rectangle ++(2,2.5); \node at (23,3) {$B_{1,12}$};
\end{tikzpicture}
\caption{Sketch of the structure of the boxes $B_{k,\v}$ in dimension one. The y-axis represents the mark of the vertices and the x-axis the location.}\label{fig:box_nested}
\end{center}
\end{figure}
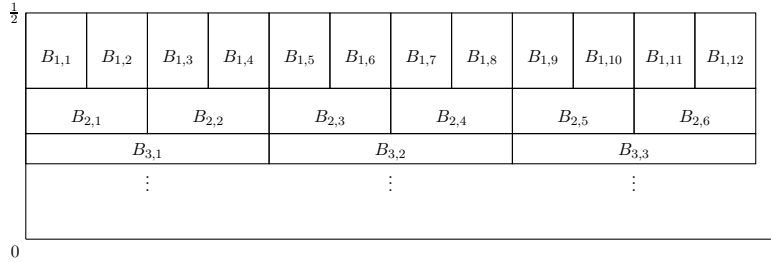
	
	Due to the assumption of $K$-$\alpha$-density, the number of vertices in any box $B_{k,\v}$ stochastically dominates a Poisson random variable with parameter
	\[
		(1-\alpha)2^{kd}\lambda \big(\tfrac{1}{2}e^{-k\theta d}-\tfrac{1}{2}e^{-(k+1)\theta d}\big)>(1-\alpha)\lambda ce^{kd(\log 2-\theta)}
	\]
	for some constant $c>0$ not depending on $k$ and $\v$. If we therefore set $\epsilon_2=\log 2-\theta>0$, it follows that
	\begin{equation}\label{eq:nonempty}
		\P(B_{k,\v}\text{ is not empty})\geq 1-\exp\big\{(1-\alpha)\lambda ce^{kd\epsilon_2}\big\}.
	\end{equation}
	On the event that $B_{k,\v}$ is non-empty, we denote by $\x_{k,\v}$ an arbitrarily chosen vertex (say, with the smallest mark) in the box. 
	
	We will use the boxes $B_{k,\v}$ to build the linear sized component. We will then use the even spatial distribution of these vertices to ensure that the vertices can form connections with each other through connectors. To this end, we colour all of the vertices with marks greater than $1/2$. Choose $\epsilon_3\in(0,\theta\gamma\wedge \delta\epsilon_1)$ such that $\sum_{k=0}^\infty e^{-kd(\theta\gamma\wedge\delta\epsilon_1-\epsilon_3)}$ converges and colour the points of $\X$ on $\R^d\times[\frac{1}{2},1)$ by the colour set $\N$ independently and such that the points with colour $k\in\N$ form a Poisson point process $\X^k$ on $\R^d\times[\frac{1}{2},1)$ with intensity proportional to $e^{-kd(\theta\gamma\wedge\delta\epsilon_1-\epsilon_3)}$. We now use this colouring to assign the coloured vertices different roles. For $k=0,\dots,k_p$ and $\v\in V_k$, we denote as
	\begin{itemize}
		\item $C_{k,\v}=\X^k\cap A_{k,\v}\times[\frac{1}{2},1)$ the \emph{potential connectors} (if $B_{k,\v}$ is non-empty).
	\end{itemize}
	We will use $C_{k,\v}$ to ensure with high probability that the vertices we consider belong to the same connected component.  Note that the intensity of of $C_{k,\v}$ is decreasing in $k$, since it is much easier for vertices with small marks (i.e. large $k$) to successfully form connections as opposed to vertices with large marks (i.e. small $k$), which require a larger number of candidates to compensate for the lower probability of forming edges.
	
	Using that $\theta\gamma<\log 2$, there exists $c>0$ such that, for all $k=0,\dots,k_p$, on the event that $B_{k,\v}$ is not empty, $ct_{k,\v}^{-\gamma/d}<2^k$, where $t_{k,\v}$ is the mark of $\x_{k,\v}$. Consequently, for each $k=0,\dots,k_p$, the volume of $B(\x_{k,\v},ct_{k,\v}^{-\gamma/d})\cap A_{k,\v}$ represents a positive proportion $\rho>\frac{1}{2^d}$ of the volume of the ball $B(\x_{k,\v},ct_{k,\v}^{-\gamma/d})$. 
	This has the following consequence on how two vertices can connect. Given that the box $B_{k+1,\v}$ and one of its ``children'' $B_{k,2\v+\mathbf{e}}$ (with $\mathbf{e}\in\{0,1\}^d$) are not empty, the corresponding vertices $\x_{k+1,\v}$ and $\x_{k,2\v+\mathbf{e}}$ lie at distance at most $\sqrt{d}2^{k+1}$ and both have marks smaller than $\frac{1}{2}e^{-k\theta d}$. Using \Cref{lemma:twoconn}, there exists a constant $c>0$ such that the number of vertices in $C_{k,\v}$ which form an edge to both $\x_{k+1,\v}$ and $\x_{k,2\v+\mathbf{e}}$ is Poisson-distributed with parameter greater than
	
	\begin{equation*}
		c\lambda e^{kd(\theta\gamma\wedge\delta\epsilon_1-\epsilon_3)}t_{k,2\v+\mathbf{e}}^{-\gamma}(1\wedge t_{k+1,\v}^{-\gamma\delta}(|x_{k,2\v+\mathbf{e}}-x_{k+1,\v}|+t_{k,2\v+\mathbf{e}}^{\gamma/d})^{-d\delta})>c\lambda e^{kd\epsilon_3},
	\end{equation*}
	where $c$ changes throughout the calculation, but does no depend on $k$ and $\v$. We have used above that $\theta>\frac{\epsilon_1+\log 2}{\gamma+\gamma/\delta}$. Write $\{\x_{k+1,\v}\leftrightarrow\x_{k,2\v+\mathbf{e}}\}$ for the event that the boxes $B_{k+1,\v}$ and $B_{k,2\v+\mathbf{e}}$ are not empty and the vertices $\x_{k+1,\v}$ and $\x_{k,2\v+\mathbf{e}}$ are connected via a vertex in $C_{k,2\v+\mathbf{e}}$. Then, we have due to the above, for all $k\in \N$ and $\v\in V_k$ that
	\begin{equation}\label{eq:twosteps}
		\P(\x_{k+1,\v}\leftrightarrow\x_{k,2\v+\mathbf{e}}|B_{k+1,\v}\text{ and }B_{k,2\v+\mathbf{e}}\text{ are not empty})\geq 1-\exp\{-c\lambda e^{kd\epsilon_3}\}.
	\end{equation}
	
	We now proceed to use \Cref{eq:nonempty,eq:twosteps} together with the tree-like structure of the boxes to prove that with high probability, there exist of order $K$ distinct vertices belonging to the same connected component. We will do this in two steps. First, we will show that the vertices in the most powerful layer $k_p$ form a connected subgraph containing $n_p^d$ distinct vertices, where each box $B_{k_p,\v}$ contains one of these vertices. Second, we will use that that each box $B_{k_p,\v}$ for $\v\in V_{k_p}$ represents the root of a $2^d$-regular tree. We will show that the trees resulting only from boxes that are not empty and connected to their ``parent'' box through a connector are percolated $2^d$-regular trees with a depth of order $k_p$ and containing $2^{k_p d}$ vertices. Since there are $n_p^d$ many trees like this, the resulting connected subgraph of $\cG$ contains at least of order $K^d$ distinct vertices.
	
	To keep notation concise, we redefine the labelling of the boxes of layer $k_p$. Let 
	\[
		\sigma:\{0,\dots,n_p^d-1\}\rightarrow V_k
	\]
	be any bijection such that $B_{k_p,\sigma(0)}=B_{k_p,\0}$ and the boxes $B_{k_p,\sigma(i)}$ and $B_{k_p,\sigma(i+1)}$ are adjacent to each other for $i=0,\dots,n_p^d-2$. We also relabel the corresponding vertices with the smallest mark inside each box, so that on the event that $B_{k_p,\sigma(i)}$ is not empty, $\x_{k_p,\sigma(i)}$ is the vertex with the smallest mark inside it. We say $B_{k_p,\sigma(0)}$ is good if and only if the box is not empty. By \eqref{eq:nonempty}, there exists $c>0$ such that
	\[
		\P(B_{k_p,\sigma(0)}\text{ is good})\geq 1-\exp\{-ce^{k_p d\epsilon_2}\}.
	\]
	For $i=0,\dots,n_p^d-2$, we say $B_{k_p,\sigma(i+1)}$ is good if
	\begin{enumerate}[(i)]
		\item $B_{k_p,\sigma(i)}$ is good,
		\item $B_{k_p,\sigma(i+1)}$ is not empty, and
		\item $\x_{k_p,\sigma(i+1)}$ and $\x_{k_p,\sigma(i)}$ are connected via a connector in $C_{k_p,\sigma(i+1)}$.
	\end{enumerate}
	Otherwise $B_{k_p,\sigma(i+1)}$ is considered bad. Let $\epsilon_4<\epsilon_2\wedge\epsilon_3$. By \Cref{eq:nonempty,eq:twosteps}, there exists $c>0$ such that for $i=0,\dots,n_p^d-2$ we have
	\[
		\P(B_{k_p,\sigma(i+1)}\text{ is good}|B_{k_p,\sigma(i)}\text{ is good})\geq 1-\exp\{-c\lambda e^{k_p d\epsilon_4}\}.
	\]
	This yields that
	\begin{equation}\label{eq:baseGood}
		\P(B_{k_p,\v}\text{ is good for all }\v\in V_{k_p})\geq 1-n_{k_p}^d\exp\{-c\lambda e^{k_p d\epsilon_4}\}\geq 1-K^{1-\epst\log 2}\exp\{-c\lambda K^{\epst\epsilon_4}\}.
	\end{equation}
	
	We now proceed to the remaining layers. For $\v\in\N^d$, denote by $\lfloor\frac{\v}{2}\rfloor$ the vector $(\lfloor\frac{v_1}{2}\rfloor,\dots,\lfloor\frac{v_d}{2}\rfloor)$. With this notation, for each $k=0,\dots,k_p-1$ and $\v\in V_k$, the parent box of $B_{k,\v}$ is given by $B_{k+1,\lfloor\frac{\v}{2}\rfloor}$. We now say, for each $k=0,\dots,k_p-1$ and $\v\in V_k$, that the box $B_{k,\v}$ is good if
	\begin{enumerate}[(i)]
		\item $B_{k+1,\lfloor\frac{\v}{2}\rfloor}$ is good,
		\item $B_{k,\v}$ is not empty, and
		\item $\x_{k,\v}$ and $\x_{k+1,\lfloor\frac{\v}{2}\rfloor}$ are connected via a connector in $C_{k,\v}$.
	\end{enumerate}
	As before, we otherwise say $B_{k,\v}$ is bad. Just like in the case of $k_p$, equations \eqref{eq:nonempty} and \eqref{eq:twosteps} imply the existence of a constant $c>0$ such that for $k=0,\dots,k_p-1$, $\v\in V_k$ and $\mathbf{e}\in\{0,1\}^d$, such that
	\begin{equation}
		\P(B_{k,2\v+\mathbf{e}}\text{ is good}|B_{k+1,\v}\text{ is good})\geq 1-\exp\{-c\lambda e^{k d\epsilon_4}\}.
	\end{equation}
	Importantly, given that $B_{k+1,\v}$ is good, the events that $B_{k,2\v+\mathbf{e}}$ is good are all independent of each other and of any other box on this layer, due to only depending on disjoint subsets of $\X$. Consequently, the number of good children of $B_{k+1,\v}$, given that this box is good, is a binomially distributed random variable with parameters $2^d$ and $p_k\geq 1-\exp\{-c\lambda e^{kd\epsilon_4}\}$. Furthermore, given the good boxes in the layer $k+1$, the number of good children of each of those good boxes are independent of each other by the above observation. As such, if we denote by $|B_k|$ the number of good boxes in layer $k$, given $|B_{k+1}|$, $|B_k|$ is binomially distributed with parameters $2^d|B_{k+1}|$ and $p_k$.
	
	We can now estimate the number of good boxes in each layer and show that with high probability, sufficiently many are good. For $k=0,\dots,k_p-1$, denote by $E_k:=\{|B_k|>2^d(1-k^{-2})|B_{k+1}|\}$ the event that layer $k$ has sufficiently many good boxes when compared to layer $k+1$. Furthermore, denote by $E_{k_p}$ the event that all boxes in layer $k_p$ are good. Then, given the event $E_k\cap E_{k+1}\cap\dots\cap E_{k_p}$ we have that
	\[
		|B_k|>|B_{k_p}|\prod_{i=k}^{k_p-1}2^d(1-i^{-2})>cn_{p}^d2^{d(k_p-k-1)}>c2^{-kd}K,
	\]
	where $c=\prod_{i=1}^\infty(1-i^{-2})>0$. It is therefore sufficient to show that there exists a $k_0$ such that $E_{k_0}\cap\dots E_{k_p}$ holds with high probability. We choose $k_0$ large enough that $1-\exp\{ce^{k_0 d\epsilon_4}\}>1-k_0^{-2}$. Then, using a simple Chernoff bound for binomially distributed random variables we get
	\[
		\P\big(E_k^c\,\big|\,|B_{k+1}|\big)\leq \exp\left\{-\frac{2^{d-1}|B_{k+1}|c\lambda e^{kd\epsilon_4}}{k^2}\right\}
	\]
	for all $k\geq k_0$. Consequently, there exists $c>0$ such that
	\[
		\P(E_k|E_{k+1}\cap\dots\cap E_{k_p})\geq 1-\exp\left\{-c\lambda \frac{e^{kd\epsilon_4}2^{-kd}K^d}{k^2}\right\}\geq1-\exp\left\{-c\lambda \frac{K^{\epst d\epsilon_4}}{(\log K)^2}\right\}, 
	\]
	provided $k_p$ is sufficiently large.
	Together with \eqref{eq:baseGood}, it follows that
	\[
		\P(E_{k_0}\cap\dots\cap E_{k_p})\geq\P(E_{k_p})\left(1-\lfloor \epst \log K\rfloor\exp\left\{-c\lambda\frac{K^{\epst d\epsilon_4}}{(\log K)^2}\right\}\right)\geq 1-\exp\{-K^\epsilon\}
	\]
	for some $\epsilon>0$ not depending on $K$, as long as $K$ and consequently $k_p$ are large enough. As $E_{k_0}\cap\dots\cap E_{k_p}$ implies the existence of, up to a constant, at least $2^{-k_0}K$ good boxes and therefore the existence of an evenly spread component inside $Q_K$, the proof is complete.

\end{proof}

\begin{mydef}
	We call the evenly spread subgraph constructed in \Cref{prop:large_component} the \emph{distinguished subgraph} of $\cG$ inside $Q_K$.
	We call the $b\cdot K$ vertices used in the construction of the evenly spread subgraph from \Cref{prop:large_component} the \emph{distinguished} vertices of the distinguished subgraph.
\end{mydef}

\begin{remark}
	Given a distinguished subgraph and a vertex with mark from $\X$, 
	the event that this vertex belongs to the same connected component inside $Q_K$ as the distinguished subgraph is an increasing event. Note that this does not hold if the distinguished subgraph is not given, nor does it necessarily hold for the event ``the vertex belongs to an evenly spread subgraph'', since the monotonicity of the event might fail depending on how the subgraph is chosen.
	 \end{remark}


For the following arguments, we will have to use sprinkling. To that end, we split the particle system $(\X_t)_{t\geq 0}$ into two parts, $(\X^{1-\varepsilon}_t)_{t\geq 0}$ and $(\X^{\varepsilon}_t)_{t\geq 0}$ for some arbitrary $\varepsilon\in(0,1)$. We do this by using the thinning property of Poisson point processes to thin $\X_0$ into the Poisson point processes $\X^{1-\varepsilon}_0$ of intensity $(1-\varepsilon)\lambda$ and $\X^{\varepsilon}_0$ of intensity $\varepsilon\lambda$ and let the two evolve (independently) over time using the same rules as before. We outline several useful facts about the resulting particle systems and graphs, most of them direct consequences of properties of Poisson point processes.

\begin{enumerate}[{Fact }1:]
	\item The systems $(\X^{1-\varepsilon}_t)_{t\geq 0}$ and $(\X^{\varepsilon}_t)_{t\geq 0}$ are stationary and independent of each other.\label{item:stationary}
	\item The graph $\cG^{\varepsilon}_t$ induced by $\X^{\varepsilon}_t$ is independent of the graph $\cG^{1-\varepsilon}_t$ induced by $\X^{1-\varepsilon}_t$.\label{item:graph_independent}
	\item A cube $Q_K$ that is $t$-$\alpha$-dense for $\X_s$ is $t$-$\varepsilon\alpha$-dense for $\X_s^{\varepsilon}$ and $t$-$(1-\varepsilon)\alpha$-dense for $\X_s^{1-\varepsilon}$.\label{item:dense}
	\item If $s,t\geq 0$ are such that $\lfloor s\rfloor\neq\lfloor t\rfloor$ and $\x,\y\in \X^{\epsilon}$, then conditionally on $\x_s,\x_t,\y_s,\y_t$, the events $\{\x_s\sim \y_s\}$ and $\{\x_t\sim \y_t\}$ are independent. The equivalent statement holds for vertices from $\X^{1-\epsilon}_t$ as well.\label{item:conditionallyIndep}
\end{enumerate}

\begin{lemma}\label{lemma:belonging}
	Let $\lambda>0$ and assume that $Q_K$ is $K$-$\alpha$-dense for some $\alpha>0$ and let $\gamma>\frac{\delta}{\delta+1}$. Let furthermore $G$ be the distinguished subgraph of $\cG^{1-\varepsilon}$ inside $Q_K$. Then, a given vertex $\x$ with a mark in $(0,1)$ and arbitrary location within $Q_K$ belongs to the same connected component of $\cG$ as $G$ with probability bounded uniformly in $K$ away from $0$, if $K$ is large enough.
	\end{lemma}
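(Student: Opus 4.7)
Write $u$ for the mark of $\x$ and $x$ for its location. The plan is to exhibit a direct edge in $\cG$ from $\x$ to some vertex of $G$ lying in the layer-$0$ box of side one (in the notation of the construction from the proof of \Cref{prop:large_component}) that contains $x$, hence at distance at most $\sqrt{d}$ from $\x$. The sprinkling enters only in the observation that $G$ is measurable with respect to $\cX^{1-\varepsilon}$ and the edges of $\cG^{1-\varepsilon}$, while by the edge-independence in \Cref{ass:main} any edge incident to $\x$ is drawn independently of all other edges given the vertex set.

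First, let $B_{0, \v_x}$ be the layer-$0$ box of side one containing $x$ and consider its ancestor chain $B_{k, \lfloor \v_x/2^k\rfloor}$, $k = 1, \ldots, k_p$, with root $B_{k_p, \v^*}$ where $\v^* = \lfloor \v_x/2^{k_p}\rfloor$. Call the chain \emph{good} if the root is good in the permutation-chain construction from the proof of \Cref{prop:large_component} and each box in the descending chain down to $B_{0, \v_x}$ is good in the tree-descent sense of that proof. On this event $B_{0, \v_x}$ contains a distinguished vertex $\x_0 \in G$ with mark $v_{\x_0} \in I_0 = (\tfrac{1}{2}e^{-\theta d}, \tfrac{1}{2})$.

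Second, rerunning the per-scale estimates from the proof of \Cref{prop:large_component}, the root $B_{k_p, \v^*}$ is good with probability at least $(1 - \exp\{-cK^{\epst \epsilon_2}\})(1 - \exp\{-c\lambda K^{\epst \epsilon_4}\})^{n_p^d - 1}$, which tends to $1$ as $K \to \infty$ by super-exponential decay of the failure probabilities against the polynomial chain length; and conditionally on the root each subsequent descendant in the chain is good with conditional probability at least $1 - \exp\{-c\lambda e^{kd\epsilon_4}\}$ at scale $k$. Since these failure probabilities are summable in $k$, the infinite product
\[
\prod_{k=0}^{\infty}\bigl(1 - \exp\{-c\lambda e^{kd\epsilon_4}\}\bigr)
\]
is strictly positive, and so $\P(\text{chain good}) \geq c_0$ for some constant $c_0 > 0$ independent of $K$.

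Third, on this event \Cref{ass:main} applied to $\x$ and $\x_0$ at distance at most $\sqrt{d}$ yields
\[
\P_{\x, \x_0}(\x \sim \x_0) \;\geq\; \alpha\bigl(1 \wedge \kappa_1 (u \wedge v_{\x_0})^{-\delta\gamma} d^{-\delta d/2}\bigr) \;\geq\; p_0 > 0,
\]
with $p_0$ depending only on $u, d, \alpha, \kappa_1, \gamma, \delta$ but not on $K$. Since $\{\text{chain good}\}$ is measurable with respect to $\cX^{1-\varepsilon}$ and the edges of $\cG^{1-\varepsilon}$, while the edge $\{\x \sim \x_0\}$ is, by \Cref{ass:main}, conditionally independent of it given the vertex set, we combine the two bounds to obtain
\[
\P(\x \in \mathcal{C}(G)) \;\geq\; c_0 p_0 > 0
\]
uniformly in $K$. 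The main obstacle is the second step: \Cref{prop:large_component} asserts only that many boxes are good in aggregate, not that a specific chain is good with positive probability. Extracting the required $K$-uniform marginal bound requires isolating the telescoping product of per-scale conditional success probabilities along the single chain leading to $B_{0,\v_x}$ and using summability of the per-scale failure probabilities to keep the infinite product bounded away from zero as $k_p \to \infty$.
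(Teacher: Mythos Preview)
There is a genuine gap, and it concerns which randomness the bound is taken over. The lemma, as stated and as used downstream in \Cref{lemma:shared_vertex} and in the proofs of the theorems, asks for a lower bound that holds \emph{conditionally on the distinguished subgraph $G$}; the only remaining randomness is that of $\X^{\varepsilon}$ and the fresh edge variables. Your event $\{\text{chain good}\}$ is, as you correctly note, measurable with respect to $\cG^{1-\varepsilon}$. Once $G$ is given this event is therefore deterministic: the chain descending to $B_{0,\v_x}$ is either good or it is not, and your inequality $\P(\text{chain good})\geq c_0$ carries no conditional content. The construction in \Cref{prop:large_component} guarantees that \emph{every} bottom-layer box $B_{k_p,\v}$ contains a distinguished vertex and that at least $b\cdot K$ of the layer-$0$ boxes are good in aggregate; it does \emph{not} guarantee that the particular layer-$0$ box containing an arbitrary $x$ is good. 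For such $x$ your argument produces no vertex $\x_0$, and hence no bound. An averaged bound over a uniformly random $x$ would not rescue the applications either: the thinning in \Cref{lemma:shared_vertex} needs each sprinkled vertex to connect with probability at least $p$ \emph{conditionally on $G_1,G_2$}, and the repeated-trial bound $(1-p)^{(1-\epsilon)t}$ in the theorem proofs needs the success probability at each time step given the past, not merely on average over $\cG^{1-\varepsilon}$.

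The paper's proof avoids this by never relying on which layer-$0$ boxes of $G$ happen to be good. Instead it uses $\X^{\varepsilon}$, which is independent of $G$, to build a fresh descending chain: first a single edge from $\x$ to a moderately powerful vertex $\y\in\X^{\varepsilon}$ (mark below $1/\log K$); then a path through ever more powerful $\X^{\varepsilon}$-vertices, alternating with $\X^{\varepsilon}$-connectors, down to the bottom scale $k_p$; and finally a connector from this endpoint to the nearest bottom vertex of $G$, which exists by the very definition of an evenly spread subgraph. Each step has probability bounded away from $0$ conditionally on $G$ and uniformly in the location of $\x$, precisely because all the relevant randomness lives in $\X^{\varepsilon}$ and the edge variables. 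In short, you try to ride the tree already built in $\cG^{1-\varepsilon}$ down to $\x$, whereas the paper builds a parallel tree in $\X^{\varepsilon}$ up from $\x$ and joins it to $G$ only at the bottom layer, where a meeting point is guaranteed.
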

	
	\begin{proof}
	For the duration of the proof, we consider only vertices located inside $Q_K$ even when we do not explicitly say this. We will argue the claim in 3 steps:
	\begin{enumerate}[{Step }1:]
		\item For large enough $K$, $\x$ has a probability bounded away from $0$ of connecting to a vertex with mark smaller than $1/\log K$ belonging to $\X^{\varepsilon}$.
		\item For large enough $K$, any vertex with mark $s<1/\log K$ (and in particular the vertex from Step 1) of $\X^{\varepsilon}$ starts, with probability bounded away from $0$, a path of vertices from $\X^{\varepsilon}$ with ever decreasing marks (alternating through connectors), the final vertex having mark smaller than $K^{-\epst\theta \log 2}$.
		\item The final vertex of the path from Step 2 forms, for $K$ large enough, with probability bounded away from $0$ a connection via a connector from $\X^{\varepsilon}$ to the closest distinguished vertex in the bottom layer of $G$.
	\end{enumerate}
	Note that at every step, save for $\x$ and the distinguished vertex belonging to the bottom layer of $G$, every vertex considered belongs to $\X^{\varepsilon}$ and as such its location and mark are independent of $G$.
	
	Step 1 is an immediate consequence of \Cref{ass:main}. The vertices with mark smaller than $1/\log K$ that form a connection to $\x=(x,t)$ stochastically dominate a Poisson point process with intensity $\varepsilon\lambda\alpha(1\wedge\kappa_1 s^{-\delta\gamma}|x-y|^{-\delta d})$ on $Q_K\times(0,1/\log K)$. Consequently, the number of neighbours of $\x$ in $\X^{\varepsilon}\cap Q_K$ stochastically dominates a Poisson random variable with intensity
	\[
		\int_{Q_K}\mathd y\int_{0}^{1/\log K}\mathd s \varepsilon\lambda\alpha(1\wedge\kappa_1 s^{-\delta\gamma}|x-y|^{-\delta d}).
	\]
	Since the above integral can easily be checked to be increasing in $K$ and therefore bounded away from $0$ for large values of $K$, Step 1 follows by noting that a Poisson random variable with intensity bounded away from $0$ has probability of being equal $0$ bounded away from $1$. We denote the vertex with the smallest mark of these vertices by $\y$.
	
	In order to prove Step 2, we use a similar construction as in the proof of \Cref{prop:large_component}. To that end, let $y\in Q_K$ be the location of $\y$. Next, similar to the proof of \Cref{prop:large_component}, let $n_p=\lfloor K^{(1-\epst \log 2)/d}\rfloor$ and $k_p=\lfloor (\epst\log K)/d\rfloor$ and define
	\[
		B_{k}:=A_{k}\times\big(\tfrac{1}{2}e^{-(k+1)\theta d},\tfrac{1}{2}e^{-k\theta d}\big)\;\text{for }k=0,\dots k_p,
	\]
	with 
	\[
		A_{k}:=\bigtimes_{i=1}^d\big(2^k v_{i,k},2^k(v_{i,k}+1)\big),
	\]
	where $\v_k=(v_{1,k},\dots,v_{d,k})$ are chosen such that $y\in A_k$. Note that by construction, $A_{k}\subset A_{k+1}$ for every $k\in \{0,\dots,k_p-1\}$. As in the proof of \Cref{prop:large_component}, we also colour all of the vertices of $\X^{\varepsilon}$ with marks greater than $1/2$. Let $\epsilon_1>0$ small enough that $\theta>\frac{\epsilon_1+\log 2}{\gamma+\gamma/\delta}$.
	Choose $\epsilon_2\in(0,\theta\gamma\wedge \delta\epsilon_1)$ such that $\sum_{k=0}^\infty e^{-kd(\theta\gamma\wedge\delta\epsilon_1-\epsilon_2)}$ converges and colour the vertices of $\X^{\varepsilon}$ on $\R^d\times[\frac{1}{2},1)$ by the colour set $\N$ independently and such that the vertices with colour $k\in\N$ form a Poisson point process $\X^k$ on $\R^d\times[\frac{1}{2},1)$ with intensity proportional to $e^{-kd(\theta\gamma\wedge\delta\epsilon_1-\epsilon_2)}$. For $k=0,\dots,\k_p$, we denote by
	\[
		C_{k}:=\X^k\cap A_{k}\times[\frac{1}{2},1)
	\]	
	the \emph{potential connectors} (if $B_{k}$ is not empty). Then, using the same computation as in the proof of  \Cref{prop:large_component} and setting $\epsilon_3=\log 2-\theta>0$ as there, the probability that $B_k$ is not empty is greater than
	\[
		1-\exp\big\{(1-\alpha)c\varepsilon\lambda e^{kd\epsilon_3}\big\}.
	\]
	Likewise, given that $B_k$ and $B_{k+1}$ are not empty, the probability that there exists a connector vertex from $C_k$ that is connected to the oldest vertex in $B_k$ and the oldest vertex in $B_{k+1}$ is greater than
	\[
		1-\exp\{-c\varepsilon\lambda e^{kd\epsilon_2}\}.
	\]
	Note that this bound also holds if instead of the oldest vertex inside $B_k$ one uses an arbitrary given vertex from $B_k$, which we implicitly use next.
	
	Let now $s$ be the mark of $\y$ and $k^*$ chosen such that $\y\in B_{k^*}$. Then, using the above bounds, the probability that there exists a path of vertices alternating between $B_k$ and $C_k$  as $k$ goes from $k=k^*$ to $k_p-1$, starting from $\y$ is greater than
	\[
		1-\sum_{k=k^*}^{k_p-1}\exp\{-c\varepsilon\lambda e^{kd(\epsilon_2\wedge\epsilon_3)}\}>1-\sum_{k=k^*}^{\infty}\exp\{-c \varepsilon\lambda e^{kd(\epsilon_2\wedge\epsilon_3)}\}.
	\]
	Since the infinite sum above is summable and $k^*$ is increasing in $K$, this probability is bounded can be made strictly positive for large enough $K$. We denote by $\z$ the vertex of this path that lies in $B_{k_p}$, i.e. the final vertex of the path.
	
	For Step 3, we begin by noting that the bottom vertices of $G$ have marks smaller than $K^{-\epst\theta d}$. Similarly, $\z$ has a mark smaller than $K^{-\epst\theta d}$. By construction of $G$, there exists at least one vertex among the bottom vertices that is at distance at most $2^{(\epst\log K)}$ from $\z$, and by the same argument as in the proof of \Cref{prop:large_component}, the probability that the two vertices are connected via a connector from $C_{k_p+1}$ grows exponentially close to $1$ as $K$ is made large and is therefore bounded away from $0$ for $K$ large enough.
	
	Combined, this gives the desired claim.
	\end{proof}

\begin{lemma}\label{lemma:shared_vertex}
	Let $G_i$, $i\in\{1,2\}$ be the distinguished subgraphs of $\cG^{1-\varepsilon}$ inside $Q_K$ at times $t_1$ and $t_2$ such that $\lfloor t_1\rfloor\neq\lfloor t_2\rfloor$. Then, the probability that there exists at least one vertex $\x\in\X^{\varepsilon}$ inside $Q_K$ that belongs to the same connected component of $\cG$ as $G_i$ at time $t_i$, for $i\in\{1,2\}$, is larger than
	\[
		1-\exp\{-c(1-\exp\{-\tfrac{K^{2/d}}{t_2-t_1}\})\varepsilon\lambda K\},
	\]
	for some constant $c>0$.
\end{lemma}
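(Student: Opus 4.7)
The plan is to use the sprinkled Poisson process $\X^\varepsilon$ to produce a \emph{bridge} vertex joining the two distinguished subgraphs across times $t_1$ and $t_2$. By Facts~1 and 2, $\X^\varepsilon$ and the graph $\cG^\varepsilon$ are independent of $\X^{1-\varepsilon}$ and hence of $G_1$ and $G_2$, so I treat the latter as fixed throughout. A vertex $\x\in\X^\varepsilon$ is a \emph{bridge} if $\x_{t_i}\in Q_K$ and $\x_{t_i}$ belongs to the connected component of $\cG_{t_i}$ containing $G_i$ for both $i\in\{1,2\}$. The statement then reduces to a lower bound on the probability that at least one bridge exists.

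Set $\tau:=t_2-t_1\geq 1$. The first step is a Brownian motion estimate for the expected number of candidates that lie in $Q_K$ at both times. Using a reflection-type bound on an inner sub-cube of $Q_K$ and integrating, I expect to obtain
\[
\int_{Q_K}\P\bigl(x+W_{[0,\tau]}\subset Q_K\bigr)\,\mathd x \;\geq\; c\,K\bigl(1-\exp\{-K^{2/d}/\tau\}\bigr),
\]
where $W$ is a standard $d$-dimensional Brownian motion. Together with the stationarity of $\X^\varepsilon_{t_1}$ (Fact~1), this yields that the expected number of vertices $\x\in\X^\varepsilon$ with both $\x_{t_1}$ and $\x_{t_2}$ in $Q_K$ is at least $c\varepsilon\lambda K\bigl(1-\exp\{-K^{2/d}/\tau\}\bigr)$.

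To decouple the two connection events, I split $\X^\varepsilon$ into independent Poisson thinnings $\X^{\varepsilon/2}$ and $\widetilde{\X}^{\varepsilon/2}$, using the former as the pool of bridge candidates and the latter as the pool of connectors supplying the connection argument of \Cref{lemma:belonging}. Conditionally on a candidate $\x\in\X^{\varepsilon/2}$ with both positions in $Q_K$, applying \Cref{lemma:belonging} at time $t_1$ with $G_1$ and at time $t_2$ with $G_2$ (with connectors drawn from $\widetilde{\X}^{\varepsilon/2}$) shows that $\x_{t_i}$ belongs to the same component of $\cG_{t_i}$ as $G_i$ with probability at least some $\rho>0$ uniformly in $K$. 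Since $\lfloor t_1\rfloor\neq\lfloor t_2\rfloor$, Fact~4 gives that the edge events at the two times are conditionally independent given the positions of the vertices, so the bridge probability is at least $\rho^2$ per qualifying candidate. A Poisson thinning of $\X^{\varepsilon/2}$ then shows that the number of bridges stochastically dominates a Poisson random variable of mean at least $c\rho^2\varepsilon\lambda K\bigl(1-\exp\{-K^{2/d}/\tau\}\bigr)$, and a standard Poisson tail bound produces the claimed bound on the probability of no bridge existing.

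The main obstacle is arranging the conditional independence so that the per-candidate bridge probability enters as a clean product: the layered sprinkling $\X^\varepsilon=\X^{\varepsilon/2}\cup\widetilde{\X}^{\varepsilon/2}$ separates the candidate vertices from the connectors used in \Cref{lemma:belonging}, while Fact~4 decorrelates the two edge-sampling events because $\lfloor t_1\rfloor\neq\lfloor t_2\rfloor$ forces a fresh resampling of the edge marks between the two times. A secondary subtlety is that the Brownian estimate in the displayed inequality must hold uniformly across the easy regime $\tau\ll K^{2/d}$, where almost all of $Q_K$ contributes, and the diffusive regime $\tau\gtrsim K^{2/d}$, where only an inner region of relative size of order $K^{2/d}/\tau$ survives; both cases are captured by the single factor $1-\exp\{-K^{2/d}/\tau\}$.
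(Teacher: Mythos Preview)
Your proposal is correct and follows essentially the same route as the paper: restrict to candidates that stay inside $Q_K$ between $t_1$ and $t_2$ via a Gaussian displacement bound, split $\X^\varepsilon$ into a candidate half and a connector half, apply \Cref{lemma:belonging} at each time to get a uniform per-candidate success probability $\rho$, and then use Poisson thinning to obtain the exponential bound. Your explicit invocation of Fact~\ref{item:conditionallyIndep} to decorrelate the edge events across the two time steps is in fact slightly more careful than the paper's own phrasing, which appeals somewhat loosely to ``independence of the point processes''; otherwise the arguments coincide.
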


\begin{proof}
	We first consider the probability that a vertex that is located inside the middle (say at distance at least $K^{1/d}/4$ from the boundary) of $Q_K$ at $t_1$ is located inside $Q_K$ at time $t_2$. Using the bound on the probability that a vertex moves further than $(K^{1/d}/4)$ in $t_2-t_1$ time, this probability is larger than
	\[
		1-\exp\left\{-c\tfrac{K^{2/d}}{t_2-t_1}\right\},
	\]
	for some positive constant $c$.
		
	We can, without loss of generality, sample $\x\in\X^{\varepsilon}$ independently of any other vertices from $\X^{\varepsilon}$ used in the rest of the arguments, by simply splitting $\X^{\varepsilon}$ further into two disjoin Poisson point processes, each with intensity $\frac{\varepsilon}{2}$ and sampling $\x$ from the first of the two, with the rest of the vertices used in the other sprinkling arguments being sampled from the second. We omit this detail in the rest of the argument to keep notation concise.
	
	By \Cref{lemma:belonging}, any vertex of $\X$ inside $Q_K$ belongs to the same connected component as $G_1$ (resp. $G_2$) with probability at least $p>0$, with $p$ bounded away from $0$. By the independence of the point processes $\X^{1-\varepsilon}_{t_1}$, $\X^{1-\varepsilon}_{t_2}$, $\X^{\varepsilon}_{t_1}$ and $\X^{\varepsilon}_{t_2}$, a given vertex $\x\in\X^{1-\varepsilon}$ belongs to the connected component of $G_{t_1}$ at time $t_1$ and $G_{t_2}$ at time $t_2$ with probability at least $p^2$. Therefore, using the thinning property of Poisson point processes, the vertices of $\X_0^{\varepsilon}$ that are located in the middle of $Q_K$ at $t_1$  and are inside $Q_K$ at $t_2$, that belong to both connected components at their respective times, stochastically dominate a Poisson point process in $Q_K$ of intensity $(1-\exp\{-\frac{K^{2/d}}{t_2-t_1}\})p^2\varepsilon\lambda$. The probability of at least one such vertex existing is therefore at least $1-\exp\{-(1-\exp\{-\frac{K^{2/d}}{t_2-t_1}\})p^2\varepsilon\lambda K\}$.
\end{proof}

%

\section{Proofs of theorems}\label{sec:proofs}
\subsection{Broadcast time}
Recall that we consider broadcast time on $\mathbb{T}_n^d$, the $d$-dimensional torus of volume $n$.


\begin{proof}[Proof of \Cref{thm:flooding}]
	Let $t=C\log n(\log\log n)^\varepsilon$ for some sufficiently large constant $C$ and any $\varepsilon>0$. Let $\epsilon_1>0$ be small but fixed. We will consider the broadcast of information over times $i\in[0,t]$. Note that if we want to apply \Cref{prop:alpha_dense} with this choice of $t$, the cube $Q_L$ will have side-length $t\ll n$. We will therefore apply \Cref{prop:alpha_dense} across a tessellation of the torus into cubes of side length $t$. For each such cube $Q_L$, at the $(1-\epsilon_1)t$ times when the cube $Q_L$ is dense, using \Cref{prop:large_component}, the cube $Q_L$ contains a distinguished subgraph with probability at least $1-\exp\{-L^{\epsilon_2}\}$. 
	
	Taking a union bound across the $n/t^d$ many cubes in the torus, the probability that each of the $n/t^d$ cubes contains a distinguished subgraph at all of the $(1-\epsilon_1)t$ times when the cube is $t$-$\epsilon_1$-dense is greater than
	\[
		1-\frac{n}{t^d}\exp\{-c_1t^{c_2}\}-t(1-\epsilon_1)\frac{n}{t^d}\exp\{-t^{\epsilon_2}\},
	\]
	where $c_1$ and $c_2$ are the constants from \Cref{prop:alpha_dense}.
	Furthermore, by \Cref{lemma:shared_vertex}, the probability that inside the given cube $Q_L$, all of the distinguished components always share at least one vertex with their preceding distinguished component is at least
	\[
		1-t\exp\{-c(1-\exp\{-\tfrac{t^{2}}{\epsilon_1 t}\})t\}
	\]
	where we have omitted the sprinkling parameter from the lemma, since it is arbitrary and constant, in order to not confuse notation. Note that the term $\epsilon_2t$ in the expression represents the worst case time difference between two subsequent observations of the distinguished component inside of a given cube.

	Next, from \cite[Equation (3.1)]{Jorritsma2023a} we have that the probability the second largest connected component of the torus of volume $n$ is larger than $k$ is
	\[
		\P(|C_n^{(2)}|\geq k)\leq \frac{n}{c}\exp\{-ck^{\zeta_{hh}},\}
	\]
	where $\zeta_{hh}$ depends only on the model parameters.
	Consequently, for our choice of $t$, the probability that any of the distinguished components at time $i$ does not belong to the largest connected component on the torus is smaller than
	\[
		\frac{n}{c}\exp\{-c (bC\log n(\log\log n)^\varepsilon)^{\zeta_{hh}/d}\}=\frac{n}{c}n^{-c (bC)^{\zeta_{hh}/d}(\log\log n)^{\varepsilon\zeta_{hh}/d}},
	\]
	which for $C$ chosen large enough can be made decreasing (and even summable) in $n$. We therefore have that the largest connected component on the torus contains in every cube of the tessellation, for at least $(1-\epsilon_2)t$ times, a distinguished subgraph of size at least $b\cdot t^d$ with probability at least 
	\begin{equation}
		\begin{split}
		&1-\frac{n}{t}\exp\{-c_1t^{c_2}\}-t(1-\epsilon_1)\frac{n}{t}\exp\{-t^{\epsilon_2}\}\\
		&\quad-\frac{n}{t}t\exp\{-c(1-\exp\{-\tfrac{t^{2}}{\epsilon_1 t}\}) t\}-t\frac{n}{c}n^{-c (bC)^{\zeta_{hh}/d}(\log\log n)^{\varepsilon\zeta_{hh}/d}},
		\end{split}\label{eq:denseTorus}
	\end{equation}
	which tends to $1$ as $n$ grows large.
	
	We now finally consider the propagation of information. First, the probability that the origin vertex does not belong to the largest connected component at any time during $[0,t/2)$ is smaller than the probability that it does not belong to the same connected component as the distinguished subgraph inside the cube $Q_L$ to which the origin belongs, for all of the at least $(\frac{1}{2}-\epsilon)t$ times in $[0,t/2)$ when the distinguished subgraph exists. Let $p$ be the probability from \Cref{lemma:belonging} that a vertex with an arbitrary mark belongs to the same connected component as the distinguished subgraph. Then, due to the above, the probability that the origin does not belong to the largest connected component during $[0,t/2)$ is smaller than $(1-p)^{(\frac{1}{2}-\epsilon)t}$.
	Consequently, we have with probability
	\begin{equation}
		1-(1-p)^{(\frac{1}{2}-\epsilon)t}\label{eq:originBroadcast}
	\end{equation}
	that the largest connected component on the torus has already received the broadcast by time $\frac{t}{2}$. Each vertex of the graph therefore has $\frac{t}{2}$ time to receive the information by connecting to the largest connected component. Using a simple Chernoff bound, the number of vertices on the torus is at most $(1+\delta)n$ with probability larger than $1-e^{-\Omega(n)}$ and on this event, the probability that each of the vertices belongs to the largest connected component at least once during $[t/2,t]$ is at least
	\[
		1-(1+\delta)n(1-p)^{(\frac{1}{2}-\epsilon)t}.
	\]
	Combining this with \eqref{eq:denseTorus} and \eqref{eq:originBroadcast}, recalling that $t=C\log n(\log\log n)^\varepsilon$ and setting $C$ large enough then gives the result.	
\end{proof}

\subsection{Percolation time}

\begin{proof}[Proof of \Cref{thm:percolation}]
	For $j\in\N_0$ define
	\[
		A_j:=\{\0_j\text{ does not belong to }\mathcal{C}_\infty^j\},
	\]
	that is the event that at time $j$, the origin vertex $\0$ does not belong to the infinite connected component $\mathcal{C}_\infty^j$. It then follows, that
	\[
		\P(T_{\text{perc}}>t)\leq\P\left(\cap_{j=0}^{\lfloor t\rfloor} A_j\right)=:\P(B_t).
	\]
	
	Consider now the cube $Q_{L^d}$ for $L=t$. On the event that at time $j$ there exists a distinguished subgraph inside of $Q_{L^d}$, let $\tilde A_j$ to be the event that the origin vertex $\0$ does not belong to the same connected component as the distinguished subgraph at time $j$. In analogy to $B_t$, define $\tilde B_t$ to be the event that $\0$ does not belong to the same connected component as the distinguished subgraph for any of the discrete time steps $j=0,1,\dots,\lfloor t\rfloor$. Next, let $C_j$ be the event that at time $j$, the distinguished subgraph belongs to the infinite connected component $\mathcal{C}_\infty^j$ and let $D_t:=\cap_{j=0}^{\lfloor t\rfloor}C_j$ be the event that at all discrete times $j=0,1,\dots,\lfloor t\rfloor$, the distinguished subgraph inside $Q_{L^d}$ is a subgraph of the infinite component.
	
	Let $\epsilon_t>0$ be a small constant and let $E_t$ be the event that for at least $(1-\epsilon_t)t$ time steps, the cube $Q_{L^d}$ is $L^d$-$\xi$-dense, where $\xi\in(0,1)$ can be chosen arbitrarily, but should be thought of as small. Finally, let $F_t$ be the event that an evenly spread component exists at each of the at least $(1-\epsilon_t)t$ times when the cube $Q_{L^d}$ is dense.
	
	We then first note that
	\begin{equation}
		\begin{split}
			\P(B_t)&\leq \P(B_t\cap F_t\cap E_t) + \P(F_t^c\cap E_t)+ \P(E_t^c)\\
			&\leq\P(B_t\cap F_t\cap E_t) + t\exp\{-L^{d\epsilon}\} + \exp\{-c_1 t^{c_2}\},
		\end{split}\label{eq:split1}
	\end{equation}
	where we used \Cref{prop:large_component,prop:alpha_dense} and a union bound across the at most $\lfloor t\rfloor$ time steps in the second inequality.
	
	Next, if $\tilde A_j^c$ holds for some $j\in\{0,\dots,\lfloor t\rfloor\}$ and $D_t$ holds, then $B_t$ cannot hold and consequently
	\begin{equation}
		\P(B_t\cap F_t\cap E_t)\leq \P((\tilde B_t\cup D_t^c)\cap F_t\cap E_t)\leq \P(\tilde B_t\cap F_t\cap E_t)+\P(D_t^c\cap F_t\cap E_t).
		\label{eq:split2}
	\end{equation}
	We can bound $\P(D_t^c\cap F_t\cap E_t)$ using the results of \cite[Theorem 2.2]{Jorritsma2023a} by
	\begin{equation}
		\P(D_t^c\cap F_t\cap E_t)\leq t c\exp\left\{-c^{-1}(b L^{d})^{\zeta_{hh}}\right\},
		\label{eq:split3}
	\end{equation}
	where $c$ and $\zeta_{hh}$ are two positive constants that depend on the model parameters only, and the $t$ term comes from taking a union bound across the $\lfloor t\rfloor$ time steps. Note that by \cite{Gracar2021} the above bound holds for any $\lambda>0$.
	
	We next proceed to bound $\P(\tilde B_t\cap F_t\cap E_t)$. By \Cref{lemma:belonging}, 
	\begin{equation}
		\P(\tilde B_t\cap F_t\cap E_t)\leq (1-p)^{(1-\epsilon_t)t},
		\label{eq:split4}
	\end{equation}
	where $p$ is the (uniformly in $L^d$) positive probability that an arbitrary vertex belongs to the same connected component as the distinguished subgraph. Putting \eqref{eq:split1}, \eqref{eq:split2}, \eqref{eq:split3} and \eqref{eq:split4} together, we obtain
	\[
		\P(B_t)\leq \exp\{-ct^{\frac{1}{c}}\}
	\]
	for some sufficiently large constant $c$, where we have used that $L=t$, which completes the proof.
	\end{proof}

\appendix
\section{Known results}

\begin{lemma}\label{lemma:twoconn}
Given two vertices $\x=(x,u)$ and $\y=(y,v)$ with marks smaller than $\frac{1}{2}$, the number of vertices with mark larger than $\frac{1}{2}$ which form an edge to both $\x$ and $\y$ is Poisson-distributed with parameter larger than 
\begin{equation}
C u^{-\gamma} \big(1\wedge v^{-\gamma\delta}\big(\abs{y-x} + u^{-\gamma/d}\big)^{-d\delta}\big), \label{eq:twoconn}
\end{equation}
where $C>0$ is a constant that does not depend on $k$.
\end{lemma}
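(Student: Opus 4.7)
The plan is to view this as a standard Poisson-thinning computation combined with a geometric estimate on the intensity of the "common-neighbour" point process, using Assumption \ref{ass:main} for the lower bound on the edge probabilities.

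First, by the restriction map, the vertices of $\X$ with mark larger than $\tfrac{1}{2}$ form a Poisson point process on $\R^d \times (\tfrac{1}{2},1)$ of intensity $\lambda$. For a potential connector $\z=(z,w)$ with $w>\tfrac{1}{2}$, the events $\{\x\sim\z\}$ and $\{\y\sim\z\}$ are conditionally independent given $\mathcal{X}$ by Assumption~\ref{ass:main}, and since $u,v<\tfrac{1}{2}<w$ we have $u\wedge w=u$ and $v\wedge w=v$. Applying the lower bound in Assumption~\ref{ass:main} to each of the two edges and then using Poisson thinning, the number of vertices with mark above $\tfrac{1}{2}$ forming an edge to both $\x$ and $\y$ is Poisson-distributed with parameter at least
\[
\lambda\alpha^2\,\tfrac{1}{2}\int_{\R^d}\bigl(1\wedge \kappa_1 u^{-\delta\gamma}|z-x|^{-\delta d}\bigr)\bigl(1\wedge \kappa_1 v^{-\delta\gamma}|z-y|^{-\delta d}\bigr)\mathd z.
\]

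Next, I would lower bound this integral by restricting to the Euclidean ball $B:=B(x,u^{-\gamma/d})$. On $B$ the first factor is bounded below by a positive constant (it equals the minimum with $1$ precisely inside this ball, up to the constant $\kappa_1$). For the second factor, the triangle inequality gives $|z-y|\leq |x-y|+u^{-\gamma/d}$ for every $z\in B$, hence
\[
1\wedge \kappa_1 v^{-\delta\gamma}|z-y|^{-\delta d}\;\geq\;1\wedge \kappa_1 v^{-\delta\gamma}\bigl(|x-y|+u^{-\gamma/d}\bigr)^{-\delta d}.
\]
Pulling this (constant in $z$) factor out of the integral over $B$ and using that the volume of $B$ is a constant multiple of $u^{-\gamma}$, the claimed lower bound
\[
C u^{-\gamma}\bigl(1\wedge v^{-\gamma\delta}(|y-x|+u^{-\gamma/d})^{-d\delta}\bigr)
\]
follows, with $C$ absorbing $\lambda,\alpha,\kappa_1$ and the volume constant.

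There is no substantive obstacle here: the step to take a little care with is the triangle inequality bound, making sure the two scales $|x-y|$ and $u^{-\gamma/d}$ are combined additively (which is essential later in the proof of Proposition~\ref{prop:large_component}, where the bound is applied in the regime where both scales are comparable). The role of the condition $u,v<\tfrac{1}{2}$ is purely to guarantee that $u\wedge w=u$ for every connector $\z$, so that the factor $u^{-\delta\gamma}$ (rather than $w^{-\delta\gamma}$) appears in the edge probability and can be integrated out against the mark variable without loss.
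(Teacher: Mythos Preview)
Your proposal is correct and follows essentially the same approach as the paper: restrict the Poisson-thinning integral to the ball $B(x,u^{-\gamma/d})$, bound the first edge-probability factor below by a constant on that ball, and use the triangle inequality $|z-y|\le |x-y|+u^{-\gamma/d}$ for the second factor before integrating out the volume. The paper's proof is essentially a terser version of what you wrote.
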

\begin{proof}
We consider the vertices $\z = (z,w)$ with $\abs{x-z}^d < u^{-\gamma}$. Note that these vertices lie close enough to $\x$ that they form an edge to it with probability at least $\alpha(1\wedge\kappa_1)$. Then, the number of those vertices which form an edge to $\x$ and $\y$ is Poisson-distributed with parameter bounded from below by
\begin{align*}
\int_{\frac{1}{2}}^{1} \int_{B(x,u^{-\gamma/d})} \mathd \z &\alpha^2(1\wedge \kappa_1)(1\wedge \kappa_1 v^{-\gamma\delta} \abs{z-y}^{-d\delta})\\
\geq &\frac{V_d \alpha^2(1\wedge \kappa_1)}{2}u^{-\gamma}\big(1\wedge \kappa_1 v^{-\gamma\delta}\big(\abs{y-x} + u^{-\gamma/d}\big)^{-d\delta}\big),
\end{align*}
where $V_d$ is the volume of the $d$-dimensional unit ball and $\alpha$ comes from Assumption \ref{ass:main}. Thus, there exist a constant $C>0$ sufficiently small such that \eqref{eq:twoconn} holds.
\end{proof}

\begin{lemma}[Chernoff bound for Poisson]
\label{lemma:pchernoff}
Let \(P\) be a Poisson random variable with mean \(\lambda\). Then, for any \(0<\epsilon<1\),
\[
	\mathbb{P}(P<(1-\epsilon)\lambda) < \exp\{-\lambda\epsilon^2/2\}
\]
and
\[
	\mathbb{P}(P > (1 + \epsilon)\lambda) < \exp\{-\lambda\epsilon^2/4\}.
\]
\end{lemma}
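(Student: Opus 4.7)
The plan is to apply the standard Chernoff (exponential-moment) method, since for a Poisson random variable $P$ with mean $\lambda$ the moment generating function is explicit: $\E[e^{tP}] = \exp(\lambda(e^{t}-1))$ for every $t \in \R$. The two tail bounds will follow by the same procedure, differing only in the sign of the auxiliary parameter $t$ and in an elementary one-line inequality at the end.

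For the upper tail, for any $t > 0$ Markov's inequality gives $\P(P > (1+\epsilon)\lambda) \leq \exp(\lambda(e^{t}-1) - t(1+\epsilon)\lambda)$. Minimising the right-hand side in $t > 0$ leads to the choice $t = \log(1+\epsilon)$, yielding
\[
	\P(P > (1+\epsilon)\lambda) \leq \exp\bigl(-\lambda[(1+\epsilon)\log(1+\epsilon) - \epsilon]\bigr).
\]
I would then close the argument by the elementary inequality $(1+\epsilon)\log(1+\epsilon) - \epsilon \geq \epsilon^{2}/4$ valid for $\epsilon \in (0,1)$, verified by a routine second-derivative check on the function $f(\epsilon) := (1+\epsilon)\log(1+\epsilon) - \epsilon - \epsilon^{2}/4$, which satisfies $f(0) = f'(0) = 0$ and $f''(\epsilon) = \tfrac{1}{1+\epsilon} - \tfrac{1}{2} > 0$ on $(0,1)$.

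For the lower tail, the same procedure with $t < 0$ combined with Markov's inequality gives $\P(P < (1-\epsilon)\lambda) \leq \exp(\lambda(e^{t}-1) - t(1-\epsilon)\lambda)$, and the optimal choice $t = \log(1-\epsilon) < 0$ produces
\[
	\P(P < (1-\epsilon)\lambda) \leq \exp\bigl(-\lambda[\epsilon + (1-\epsilon)\log(1-\epsilon)]\bigr).
\]
The proof concludes by the analogous elementary inequality $\epsilon + (1-\epsilon)\log(1-\epsilon) \geq \epsilon^{2}/2$ for $\epsilon \in (0,1)$, verified by the same second-derivative trick. There is no substantive obstacle to overcome here: this is a textbook Chernoff computation, and the only care required is in tracking the two elementary logarithmic inequalities that convert the optimised bounds into the stated Gaussian-type exponents.
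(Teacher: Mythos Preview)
Your argument is correct and is exactly the standard Chernoff/Cram\'er computation for Poisson tails. Note, however, that the paper does not actually prove this lemma: it is placed in the appendix under ``Known results'' and is merely stated without proof, so there is no paper proof to compare against. Your write-up would serve perfectly well as a self-contained justification.
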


\begin{lemma}[{Large deviation for Binomial, \cite[Corollary A.1.10]{Alon2000}}]
\label{lemma:bchernoff}	
Let $B$ be a binomial random variable with parameters $n$ and $p$. Then,
\[
	\P(B\geq np+a)\leq\exp\left\{a-(pn+a)\log\left(1+\tfrac{a}{pn}\right)\right\}
\]
\end{lemma}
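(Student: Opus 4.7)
The plan is to prove this via the standard exponential Markov inequality (Chernoff's method) applied to the moment generating function of $B$. First I would fix a parameter $\lambda>0$ (to be optimised later) and apply Markov's inequality to $e^{\lambda B}$, obtaining
\[
    \P(B\geq np+a)=\P(e^{\lambda B}\geq e^{\lambda(np+a)})\leq e^{-\lambda(np+a)}\,\E[e^{\lambda B}].
\]
Since $B$ is a sum of $n$ independent Bernoulli$(p)$ random variables, the MGF factorises as $\E[e^{\lambda B}]=(1-p+pe^\lambda)^n$. Using the elementary inequality $1+x\leq e^x$ with $x=p(e^\lambda-1)$ yields $\E[e^{\lambda B}]\leq \exp\{np(e^\lambda-1)\}$, so that
\[
    \P(B\geq np+a)\leq \exp\bigl\{np(e^\lambda-1)-\lambda(np+a)\bigr\}.
\]

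The next step is to optimise the right-hand side in $\lambda$. Differentiating the exponent with respect to $\lambda$ and setting the derivative to zero gives $np\,e^\lambda=np+a$, i.e. the optimal choice is $\lambda^*=\log(1+\tfrac{a}{np})$, which is nonnegative whenever $a\geq 0$ (the only regime where the bound is non-trivial; for $a<0$ the stated bound exceeds $1$ and is trivial). Substituting $\lambda^*$ back, we have $np(e^{\lambda^*}-1)=a$, so the exponent simplifies to
\[
    a-(np+a)\log\!\Bigl(1+\tfrac{a}{np}\Bigr),
\]
which gives exactly the claimed bound.

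Since this is a classical result quoted directly from \cite{Alon2000}, I would likely not reproduce the full derivation in the paper and instead just cite it. If one does include a proof, there is no real obstacle: the only mildly delicate point is verifying that the critical value $\lambda^*$ lies in the admissible range $\lambda\geq 0$ (so that Markov's inequality is being applied to a genuine upper tail), which holds under the implicit assumption $a\geq 0$. The inequality $1+x\leq e^x$ and the independence factorisation of the MGF are the only analytic ingredients needed, making the proof a two-line computation.
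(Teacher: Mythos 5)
Your derivation is correct and is exactly the standard Chernoff/MGF argument behind the cited result; the paper itself gives no proof and simply quotes \cite[Corollary A.1.10]{Alon2000}, so citing it (as you suggest) is the appropriate choice here. The optimisation step $\lambda^*=\log(1+\tfrac{a}{np})$ and the simplification $np(e^{\lambda^*}-1)=a$ are both verified correctly, and your remark about the bound being trivial for $a<0$ is a sensible sanity check.
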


{\bf Acknowledgment:} This research was supported by the \emph{Deutsche Forschungsgemeinschaft} (DFG) as Project Number~425842117.

\printbibliography
\end{document}